\newtheorem{theorem}{Theorem}[section]
\newtheorem{lemma}{Lemma}[section] 
\theoremstyle{definition} 
\newtheorem{definition}{Definition}[section]
\begin{document} 

\baselineskip=14.5pt 
\title{A generalization of T\'oth identity involving a Dirichlet Character in $\mathbb{F}_q[T]$}
\author{Esrafil Ali Molla and Subha Sarkar}
\address[Esrafil Ali Molla]{Department of Mathematics, Ramakrishna Mission Vivekananda Educational and Research Institute, G. T. Road, PO Belur Math, Howrah, West Bengal 711202, India}
\address[Subha Sarkar]{Department of Mathematics, National Institute of Technology Jamshedpur, Jharkhand, 831014, India}
\email[Esrafil Ali Molla]{esrafil.math@gmail.com}
\email[Subha Sarkar]{subhasarkar2051993@gmail.com}

\subjclass[2010]{11T55, 11A07, 11A25 } 
\keywords{Menon's identity, Dirichlet Character, additive character, arithmetic function, divisor function, Euler's totient function, M\"obius function.}

\maketitle

\begin{abstract}
Let $\mathbb{A}=\mathbb{F}_q[T]$ be the polynomial ring over the finite field $\mathbb{F}_q$. In this article, we prove a generalization of T\'oth identity to $\mathbb{A}$ involving arithmetical functions, multiplicative and additive characters. 
\end{abstract}

\section{Introduction}

For a positive integer $n$, the classical Menon's identity \cite{menon} states that
\begin{equation}\label{menon}
\displaystyle\sum_{\substack{a=1 \\ (a,n)=1}}^n (a-1,n) =\varphi(n)\sigma_0(n),
\end{equation}
where $\varphi(n)$ is the Euler's totient function and $\sigma_0(n) =\displaystyle \sum_{d \mid n}1$. 

Here and throughout this article, for any $a_1,\dots, a_k\in \mathbb{Z}$, $(a_1,\dots, a_k)$ will always mean $\gcd(a_1,\dots, a_k)$.

\smallskip 

Sury \cite{sury} generalized \eqref{menon} and proved that, for every integers $n \geq 1$ and $s\geq 0$,
\begin{equation}\label{sury}
\sum_{\substack{1 \leq a, b_1, b_2, \ldots, b_s \leq n \\ (a,n)=1}} (a-1,b_1,\ldots, b_s,n)= \varphi(n)\sigma_s(n), \; \text{ where }\; \sigma_s(n) =\displaystyle \sum_{d \mid n} d^s.
\end{equation}

\medspace

Miguel  \cite{m, m1} extended identities \eqref{menon} and \eqref{sury}  from $\mathbb{Z}$ to any residually finite Dedekind domain $\mathcal{D}$.

In another direction, introducing Dirichlet character, Cao and Zhao \cite{zc} proved that 
\begin{equation}\label{zc}
\sum_{\substack{a=1 \\ (a,n)=1}}^n (a-1,n)\chi(a) = \varphi(n) \sigma_0\left(\frac{n}{d}\right),
\end{equation}
where $\chi$ is a Dirichlet character modulo $n$ with conductor $d$. 

\medspace 

Li, Hu and Kim \cite{lhk2} generalized \eqref{sury} and \eqref{zc} to prove that
\begin{equation}\label{lhk}
	\sum_{\substack{1 \leq a, b_1, b_2, \ldots, b_s \leq n \\ (a,n)=1}} (a-1,b_1,\ldots, b_s,n)\chi(a) \lambda_1(b_1)\cdots \lambda_s(b_s)= \varphi(n)\sigma_s\left((\frac{n}{d},w_1,\ldots, w_s)\right),
\end{equation}
where $\chi$ is a Dirichlet character modulo $n$ with conductor $d$, and $\lambda_k(b)=\exp(2 \pi i w_kb/n)$ are additive characters for each $1 \leq k \leq s$.

Later T\'oth \cite{toth3} obtained the most general result in this direction by proving the following identity:

Let $F$ be an arbitrary arithmetical function and $s_j \in \mathbb{Z}$. For each $1 \leq j \leq r$, let $\chi_j$ be Dirichlet characters modulo $n$ with conductor $d_j$, and for each $1 \leq \ell \leq s$, let $\lambda_{\ell}$ be additive characters defined as $\lambda_{\ell}(b)=\exp(2 \pi i w_{\ell}b/n)$, with $w_{\ell} \in \mathbb{Z}$. Then

\begin{eqnarray}\label{tothgen}
	\sum_{\substack{1 \leq a_1,\ldots a_r, b_1, b_2, \ldots, b_s \leq n \\ }} && F((a_1-s_1, \ldots, a_r-s_r,b_1,\ldots, b_s,n)) \chi_1(a_1) \cdots \chi_r(a_r) \lambda_1(b_1)\cdots \lambda_s(b_s) \nonumber \\
	&=& \varphi(n)^r \chi_1^*(s_1) \cdots \chi_r^*(s_r) \sum_{\substack{e\mid (n/d_1,\ldots n/d_r,w_1,\ldots, w_s) \\ (n/e,s_1\cdots s_r)=1 }} \frac{e^s(\mu \ast F)(n/e)}{\varphi(n/e)^r},
\end{eqnarray}
where $\chi_j^*$ are the primitive characters modulo $d_j$ that induce $\chi_j$.

\bigskip

Let $\mathbb{A}=\mathbb{F}_q[T]$ be the polynomial ring over the finite field $\mathbb{F}_q$ with $q$ elements, where $q=p^n$, with $p$ a prime. Recently, Chen and Qi \cite{cq} generalized identity \eqref{tothgen} to $\mathbb{A}$. Before we state the result of \cite{cq}, we introduce the concept of additive character modulo $H$ on $\mathbb{A}$, for any $H \in \mathbb{A}$.

We assume that $H \in \mathbb{A}$ is a fixed polynomial of degree $m$ with the absolute value $|H|=q^{\deg H}$.

\medspace

\begin{definition}
	A non zero function $f: \mathbb{A} \to \mathbb{C} $ is said to be an arithmetical function on $\mathbb{A}$ if
	$$ f(aB)= af(B)\;\;\;\;\;\; \text{ holds for all } a\in \mathbb{F}_q^* \text{ and } B \in  \mathbb{A}.$$
\end{definition}

\medspace

\begin{definition}
Let $f,g: \mathbb{A} \to \mathbb{C}$ be two arithmetic functions on $\mathbb{A}$. The Dirichlet convolution $f \ast g$ is defined by
$$f\ast g (H)=\sum_{D \mid H} f(D)g\left(\frac{H}{D}\right).$$
\end{definition}

\smallskip

\begin{definition}
Let $ H \in \mathbb{A}.$ Then we define the following functions on $\mathbb{A}$. 
\begin{enumerate}
		\item[(i)] The M\"obius $\mu$ function is defined as 
		\begin{equation*}
			\mu(H) :=
			\begin{cases}
				1   & ~ \text{ if } H \in \mathbb{F}_q^*,\\
				(-1)^{t} &  ~ \text{ if } H = \displaystyle\prod_{\i = 1}^{t}P_{i}; \mbox{ where each } P_{i}  \mbox{ is a monic irreducible polynomial and } P_{i} \neq P_{j} \mbox{ for } i \neq j,\\
				0 & ~ \text{ otherwise. }
			\end{cases}
\end{equation*}
		
\medskip
		
\item[(ii)] The Euler function $\varphi(H)$ is defined as $$\varphi(H) := |H| \prod_{P\mid H}\left(1-\frac{1}{|P|}\right),$$ 
		which counts the number of polynomials of degree less than $\deg H$ that are coprime to $H$. 
		
	\end{enumerate}
\end{definition}

\bigskip

\begin{definition}
	Let $H \in \mathbb{A}$. A Dirichlet character $\chi$ modulo $H$ is a function $\chi:\mathbb{A} \rightarrow \mathbb{C}$ that satisfies
	
	\subitem(i) $\chi(A+KH)=\chi(A)$ for all $A,K \in \mathbb{A}$.
	
	\subitem(ii) $\chi(A_1A_2)= \chi(A_1)\chi(A_2)$ for all $A_1,A_2 \in \mathbb{A}$.
	
	\subitem(iii) $\chi(A) \neq 0$ if and only if $(A,H)=1$.
\end{definition}

\smallskip	

	\begin{definition}
		Let $\chi$ be a Dirichlet character modulo $H$.
	\begin{enumerate}
		\item[(i)] A monic divisor $D$ of $H$ is said to be an {\it induced modulus} for $\chi$ if $\chi(A)=\chi(B)$ whenever $A,B \in \mathbb{A}$ and $A \equiv B \pmod{D}$. The unique induced modulus $D_0$ such that for any induced modulus $D$, we have $D_0 \mid D$, is said to be the {\it conductor} of $\chi$.
		
		\medspace
		
		\item[(ii)]A character $\chi$ modulo $H$ is said to be a {\it primitive character modulo} $H$ if it has no induced modulus $D$ with $D \neq H$. If $\chi$ is any character modulo $H$ with conductor $D$, then there is a unique primitive character $\psi$ modulo $D$ such that  $\chi(A)=\psi(A)$ for all $A \in \mathbb{A}$.
	\end{enumerate}
\end{definition}

\medspace

\begin{definition}
	Let $H \in \mathbb{A}$ be a polynomial of degree $m$. Then for any $B \in \mathbb{A}$ with $B\equiv b_{m-1}T^{m-1} + \cdots + b_1T + b_0 \pmod{H}$, we define an additive function $t$ modulo $H$ by
	$$ t(B) = b_{m-1}.$$ 
	
	Again, for any $G\in \mathbb{A}$, if we define $t_G(B)= t(GB)$, then $t_G(B)$ is also an additive function modulo $H$. Now we define an additive character modulo $H$ on $\mathbb{A}$ by
	$$ E(G,H)(B)= \exp{ \left(\frac{2\pi i}{p} Tr(t_G(B)) \right)} \text{ for all } B \in \mathbb{A},$$
	where $Tr: \mathbb{F}_q \to \mathbb{F}_p$ is the trace map.
\end{definition}

The following lemma says that any additive character $\lambda$ modulo $H$ corresponds to some $W \in \mathbb{A}$.

\begin{lemma} \cite{cq} \label{add}
	For any additive character $\lambda$ modulo $H$ on $\mathbb{A}$, there exists a unique $W \in \mathbb{A}$ such that $ \lambda= E(W,H)$ and $\deg(W) < \deg(H)$.
\end{lemma}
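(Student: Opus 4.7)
The plan is to exhibit the map $\Phi: W \mapsto E(W,H)$ as a bijection from $\mathcal{R} := \{W \in \mathbb{A} : \deg W < m\}$, which is a complete set of residue representatives modulo $H$, onto the group $\mathcal{X}$ of all additive characters modulo $H$ on $\mathbb{A}$. Both sets have cardinality $|H| = q^m$: the set $\mathcal{R}$ consists of all polynomials of degree less than $m$, while Pontryagin duality for the finite abelian group $\mathbb{A}/H\mathbb{A}$ gives $|\mathcal{X}| = |\mathbb{A}/H\mathbb{A}| = q^m$. It therefore suffices to prove that $\Phi|_{\mathcal{R}}$ is injective.

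First I would record the basic properties of $\Phi$. The coefficient functional $B \mapsto b_{m-1}$ on $\mathbb{A}/H\mathbb{A}$ is $\mathbb{F}_q$-linear, hence so is each $t_W$; composing with the $\mathbb{F}_p$-linear trace yields an $\mathbb{F}_p$-linear, $H$-periodic function, so $E(W,H)$ is indeed an additive character modulo $H$ for every $W$. Moreover, additivity of $t_W$ in the $W$ variable (since $(W_1+W_2)B = W_1B + W_2B$) makes $\Phi$ a group homomorphism $(\mathbb{A}, +) \to \mathcal{X}$ whose kernel contains $H\mathbb{A}$.

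The main step is the injectivity of $\Phi|_{\mathcal{R}}$, equivalent to showing that $E(W,H)$ is non-trivial for every non-zero $W \in \mathcal{R}$. Here I would make a clever choice of test input: writing $W = w_k T^k + (\text{lower order terms})$ with $w_k \in \mathbb{F}_q^*$ and $0 \leq k \leq m-1$, set $B_0 = T^{m-1-k}$. Then $\deg(W B_0) \leq m-1$, so no reduction modulo $H$ occurs and the coefficient of $T^{m-1}$ in $W B_0$ is exactly $w_k$. Hence $t_W(B_0) = w_k \neq 0$, and surjectivity of $Tr: \mathbb{F}_q \to \mathbb{F}_p$ gives $c \in \mathbb{F}_q$ with $Tr(c w_k) \neq 0$; by $\mathbb{F}_q$-linearity of $t_W$,
\[
E(W,H)(cB_0) = \exp\!\left(\frac{2\pi i}{p}\, Tr(c w_k)\right) \neq 1.
\]

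Injectivity combined with the cardinality match forces $\Phi|_{\mathcal{R}}$ to be a bijection, which is exactly the existence and uniqueness claimed. The principal obstacle is the non-degeneracy of the $\mathbb{F}_p$-bilinear pairing $(W,B) \mapsto Tr(t(WB))$ on $\mathbb{A}/H\mathbb{A}$; however, the choice $B_0 = T^{m-1-k}$ reduces this to reading off a single leading coefficient \emph{before} any modular reduction occurs, after which surjectivity of the trace completes the argument.
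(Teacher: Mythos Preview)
Your argument is correct. The paper does not actually supply a proof of this lemma: it is quoted verbatim from \cite{cq}, so there is no in-paper proof to compare against. Your self-contained argument---counting $|\mathcal{R}| = |\mathcal{X}| = q^m$ and then verifying injectivity via the test vector $B_0 = T^{m-1-k}$ together with the surjectivity of the trace---is the standard and cleanest way to establish the non-degeneracy of the pairing $(W,B) \mapsto Tr(t(WB))$ on $\mathbb{A}/H\mathbb{A}$. The key observation, that $\deg(WB_0) \le m-1$ so the $T^{m-1}$-coefficient can be read off \emph{before} any reduction modulo $H$, is exactly what makes the injectivity step painless; nothing is missing.
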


\medspace

The result in \cite{cq} is the following extension of T\'oth's identity \eqref{tothgen} in $\mathbb{A}$:

Let $f$ be any arithmetical function on $\mathbb{A}$ and $H, K_j, B_i, S_j \in \mathbb{A}$. For each $1 \leq j \leq r$, let $\chi_j$ be Dirichlet characters modulo $H$ with conductors $D_j$, and for each $1 \leq i \leq s$, let $\lambda_i$ be additive characters with $W_i \in \mathbb{A}$ corresponds to $\lambda_i$ as in Lemma \ref{add}. Then

\begin{eqnarray}\label{cq}
	\sum_{\substack{ K_1,\ldots K_r, B_1, B_2, \ldots, B_s \pmod H \\ }} && f((K_1-S_1, \ldots, K_r-S_r,B_1,\ldots, B_s,H)) \chi_1(K_1) \cdots \chi_r(K_r) \lambda_1(B_1)\cdots \lambda_s(B_s) \nonumber \\
	&=& \varphi(H)^r \chi_1^*(S_1) \cdots \chi_r^*(S_r) \sum_{\substack{M\mid (\frac{H}{D_1},\ldots \frac{H}{D_r},W_1,\ldots, W_s) \\ (\frac{H}{M},S_1\cdots S_r)=1 }} \frac{|M|^s(\mu \ast f)(\frac{H}{M})}{\varphi(\frac{H}{M})^r},
\end{eqnarray}
where $\chi_j^*$ are the primitive characters modulo $D_j$ that induce $\chi_j$ for each $1 \leq j \leq r$.

Here and throughout this article, for any $A_1,\dots, A_k \in \mathbb{A}$, $(A_1,\dots, A_k)$ will denote the $\gcd(A_1,\dots, A_k)$.

\bigskip

For every positive integer $k$, T\'{o}th \cite{toth} introduced the $k$-dimensional generalized Euler's totient function $\varphi_{k}$ as,
\begin{equation*}
	\varphi_k(n)= \sum_{\substack{a_1, a_2, \ldots, a_k =1 \\ (a_1\cdots a_k,n)=1 \\ (a_1+\cdots+a_k,n)=1}}^n 1.
\end{equation*} 

For every positive integer $k$, he proved that the function  $\varphi_k$ is multiplicative and $$\varphi_k(n)= \varphi(n)^k \prod_{p \mid n} \left(1-\frac{1}{p-1}+\frac{1}{(p-1)^2}+\cdots+ \frac{(-1)^{k-1}}{(p-1)^{k-1}}\right).$$
He also proved that the following Menon-type identity
\begin{equation}\label{toth}
	\sum_{\substack{a_1, a_2, \ldots, a_k =1 \\ (a_1\cdots a_k,n)=1 \\ (a_1+\cdots+a_k,n)=1}}^n (a_1+\cdots+a_k-1,n) = \varphi_k(n)\sigma_0(n)\;\;\;\;\;\;\;\; \text{ holds. }
\end{equation}

In \cite{js,subha}, T\'oth's identity \eqref{toth} has been generalized to the ring of algebraic integers $\mathcal{O}_K$ concerning arithmetical functions and Dirichlet characters. In the case of $\mathcal{O}_K=\mathbb{Z}$, the identity of \cite{subha} reads as follows:

Let $n$ be a positive integer. Let $\chi$ be a Dirichlet character modulo $n$ with conductor $d$. Then for a fixed positive integer $r$ with $(r,n)=1$ and for any arithmetical function $F$, we have

\begin{multline}\label{meq 8}
	\sum_{\substack{1\leq  a_1, a_2,\ldots, a_k,b_1, \ldots, b_s \leq n \\ (a_1+a_2+\cdots+a_k,n)=1 \\ (a_1a_2\cdots a_k,n)=1}} F((a_1+a_2+\cdots+a_k-r,b_1,b_2,\ldots, b_s,n))\chi(a_1)\\ = \mu(d)^{k-1}\psi(r)\varphi\left(\frac{n_{0}^{k}}{d^{k-1}}\right)\varphi_k\left(\frac{n}{n_{0}}\right) \sum_{\substack{d\mid e\mid n\\ e\mid b_1,\ldots,e\mid b_s}}\frac{(\mu \ast F)(e)}{\varphi(e)},
\end{multline}
where $\psi$ is the primitive character modulo $d$ that induces $\chi$, $n_{0} \mid n$ is such that $n_0$ has the same prime factors as that of $d$ and $\left(n_0, \frac{n}{n_0}\right) = 1$.

\bigskip

In this article, we give an extension of identity \eqref{meq 8} to $\mathbb{A}$. But before that, we need to define the $k$-dimensional generalized Euler function $\varphi$ in $\mathbb{A}$.

\medspace

\begin{definition}
For any positive integer $k$, we define the $k$-dimensional generalized Euler function $\varphi_k$ on $\mathbb{A}$ as,
$$ \varphi_k(H)= \sum_{\substack{H_1, H_2, \ldots, H_k \pmod{H}\\(H_1 \cdots H_k,H)=1 \\ (H_1+\cdots+H_k,H)=1}} 1. $$
\end{definition} 
Note that $\varphi _1(H) =\varphi(H)$.

\medspace

The main results of this article are as follows. 
\begin{theorem}\label{MAIN_TH1} 
Let $H \in \mathbb{A}$. Then for every positive integer $k$, we have  $$\varphi_k(H)= \varphi(H)^k \prod_{P \mid H} \left(1-\frac{1}{|P|-1}+\frac{1}{(|P|-1)^2}-\cdots+ (-1)^{k-1}\frac{1}{(|P|-1)^{k-1}}\right).$$ 
\end{theorem}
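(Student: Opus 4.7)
The plan is to mimic T\'oth's original argument \cite{toth} in the function field setting: first establish multiplicativity of $\varphi_k$, then evaluate it at a prime power $P^a$, and finally combine the two pieces via the product formula for $\varphi(H)^k$.

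For multiplicativity, I would take coprime $H, H' \in \mathbb{A}$ and use the Chinese Remainder Theorem isomorphism $\mathbb{A}/(HH') \cong \mathbb{A}/H \times \mathbb{A}/H'$. Under this isomorphism, a $k$-tuple $(H_1,\dots,H_k)$ modulo $HH'$ corresponds to a pair of $k$-tuples $((H_1',\dots,H_k'),(H_1'',\dots,H_k''))$ modulo $H$ and modulo $H'$ respectively, and both conditions $(H_1\cdots H_k, HH')=1$ and $(H_1+\cdots+H_k,HH')=1$ split factor-by-factor. This gives $\varphi_k(HH')=\varphi_k(H)\varphi_k(H')$ immediately.

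For the prime-power evaluation, fix a monic irreducible $P$ and let $a\geq 1$. Since $(X,P^a)=1$ if and only if $X\not\equiv 0\pmod P$, and similarly the sum condition depends only on the residue modulo $P$, each residue class modulo $P$ lifts in exactly $|P|^{a-1}$ ways to a residue modulo $P^a$. Hence
\begin{equation*}
\varphi_k(P^a) = |P|^{k(a-1)} \, N_k,
\end{equation*}
where $N_k$ counts tuples $(h_1,\dots,h_k) \in (\mathbb{F}_{|P|}^*)^k$ with $h_1+\cdots+h_k \neq 0$ in the residue field $\mathbb{A}/P \cong \mathbb{F}_{|P|}$. To compute $N_k$, I would set up a two-term recurrence: letting $M_k$ denote the number of such tuples with sum \emph{equal to} $0$, one sees that $N_k+M_k=(|P|-1)^k$, and that $M_k = N_{k-1}$ (pick $h_1,\dots,h_{k-1}\in \mathbb{F}_{|P|}^*$, then $h_k=-(h_1+\cdots+h_{k-1})$ must lie in $\mathbb{F}_{|P|}^*$, forcing the partial sum to be nonzero). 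Iterating $N_k=(|P|-1)^k-N_{k-1}$ from $N_1=|P|-1$ yields the alternating sum
\begin{equation*}
N_k = (|P|-1)^k \sum_{j=0}^{k-1}\frac{(-1)^j}{(|P|-1)^j}.
\end{equation*}

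Since $\varphi(P^a)=|P|^{a-1}(|P|-1)$, we get $\varphi_k(P^a)=\varphi(P^a)^k\sum_{j=0}^{k-1}(-1)^j(|P|-1)^{-j}$, and multiplicativity then gives the stated formula after noting that the bracketed factor depends only on $P$ (not on $a$) and that $\varphi(H)^k=\prod_{P^a\|H}\varphi(P^a)^k$. The main obstacle is really just the combinatorial identity for $N_k$; once the recurrence $N_k=(|P|-1)^k-N_{k-1}$ is in hand, everything else is a routine transfer of T\'oth's $\mathbb{Z}$-argument to $\mathbb{A}$, using only that $\mathbb{A}/P$ is a finite field of size $|P|$ and that CRT holds in $\mathbb{A}$.
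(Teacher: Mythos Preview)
Your argument is correct, but it follows a genuinely different route from the paper's proof. The paper never invokes multiplicativity of $\varphi_k$; instead it introduces the two-variable auxiliary function
\[
\varphi_k(H,M)=\sum_{\substack{K_1,\dots,K_k \pmod H\\ (K_1\cdots K_k,H)=1\\ (K_1+\cdots+K_k,M)=1}} 1 \qquad (M\mid H),
\]
proves the global recursion $\varphi_k(H,M)=\varphi(H)\sum_{D\mid M}\frac{\mu(D)}{\varphi(D)}\varphi_{k-1}(H,D)$ via M\"obius inversion together with Lemma~\ref{lem1}, and then closes the induction on $k$ to obtain a product formula for $\varphi_k(H,M)$ valid for every $M\mid H$, specializing to $M=H$ at the very end.

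Your CRT-plus-local-count approach is more elementary and self-contained for this particular theorem: reducing to the residue field $\mathbb{A}/P\cong\mathbb{F}_{|P|}$ and running the two-term recurrence $N_k=(|P|-1)^k-N_{k-1}$ is exactly T\'oth's integer argument transported verbatim. What the paper's approach buys is a strictly stronger intermediate statement (the formula for $\varphi_k(H,M)$ with arbitrary $M\mid H$) and a recursion template in the second variable that mirrors the recursion Lemma~\ref{rec} used later in the proof of Theorem~\ref{MAIN-TH2}; in that sense the paper's proof of Theorem~\ref{MAIN_TH1} is a warm-up for the machinery of Section~4, whereas your proof, while cleaner in isolation, does not feed into the rest of the paper.
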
 

\medskip 

\begin{theorem}\label{MAIN-TH2} 
Let $H, K_j, B_i\in \mathbb{A}$. Let $\chi$ be a Dirichlet character modulo $H$ with conductor $D$ and for each $1\leq i \leq s$, $\lambda_i$ additive characters with $W_i \in \mathbb{A}$ corresponds to $\lambda_i$, as in Lemma \ref{add}. Then for a fixed element $S \in \mathbb{A}$ with $(S,H)=1$ and for any arithmetical function $F$ on $\mathbb{A}$, we have 
\begin{multline}\label{meq}
    \sum_{\substack{ K_1, K_2,\ldots, K_l, B_1,B_2,\ldots, B_s \pmod{H}\\(K_1+K_2+\cdots+K_l, H)=1\\ (K_1K_2\cdots K_l, H)=1}} F((K_1+K_2+\cdots+K_l-S,B_1,B_2,\ldots, B_s,H))\chi(K_1) \lambda_1(B_1) \cdots \lambda_s(B_s)\\ =  \mu(D)^{l-1} \; \psi(S) \; \varphi\left( \frac{H_0^l}{D^{l-1}}\right) \varphi_l \left( \frac{H}{H_0}\right) \sum_{\substack{D\mid M \mid H \\ \frac{H}{M} \mid W_1, W_2, \ldots, W_s}}\frac{ (\mu \ast F)(M)  }{ \varphi(M)} \left|\frac{H}{M}\right|^s,
\end{multline}
where $\psi$ is the primitive character modulo $D$ that induces $\chi$, $H_0 \mid H$ is such that $H_0$ has the same prime factors as that of $D$ and $\left(H_0, \frac{H}{H_0} \right)=1$.
\end{theorem}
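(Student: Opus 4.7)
My approach mirrors the integer argument of \cite{subha}, adapted to $\mathbb{A}$ via Möbius inversion and a Chinese Remainder decomposition. Writing $F = (\mu \ast F) \ast \mathbf{1}$ yields
$$F\bigl((K_1 + \cdots + K_l - S, B_1, \ldots, B_s, H)\bigr) = \sum_{\substack{M \mid H \\ M \mid K_1 + \cdots + K_l - S \\ M \mid B_i \text{ for all } i}} (\mu \ast F)(M),$$
and interchanging the order of summation decomposes the left-hand side of \eqref{meq} as $\sum_{M \mid H} (\mu \ast F)(M) \cdot \Sigma_B(M) \cdot T(M)$, where $\Sigma_B(M)$ collects the inner sums over the $B_i$ with the divisibility $M \mid B_i$, and $T(M)$ is the character-twisted sum over $K_1, \ldots, K_l$ satisfying the conditions of \eqref{meq} together with the extra constraint $M \mid K_1 + \cdots + K_l - S$.

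The factor $\Sigma_B(M)$ is dispatched by additive-character orthogonality. By Lemma~\ref{add}, $\lambda_i = E(W_i, H)$; substituting $B_i = M C_i$ with $C_i$ ranging modulo $H/M$ reduces each inner sum to a trivial character sum modulo $H/M$, giving $\Sigma_B(M) = |H/M|^s$ if $(H/M) \mid W_i$ for every $i$ and $\Sigma_B(M) = 0$ otherwise. This produces both the factor $|H/M|^s$ and the divisibility restriction on $M$ in \eqref{meq}.

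The heart of the proof is the evaluation of $T(M)$. I would split $H = H_0 \cdot (H/H_0)$ via CRT; correspondingly $\chi = \chi_0 \tilde{\chi}$, where $\chi_0$ is a character modulo $H_0$ of conductor $D$ and $\tilde{\chi}$ is principal modulo $H/H_0$ (since $D$ and $H_0$ share the same prime factors). Writing $M_0 = (M, H_0)$ and $M_1 = (M, H/H_0)$, the mod $H/H_0$ factor of $T(M)$ is pure counting: introducing $U = K_1 + \cdots + K_l$ and using the invariance of the count under unit scaling $K_j \mapsto aK_j$ shows that the number of admissible tuples with fixed $U$ coprime to $H/H_0$ equals $\varphi_l(H/H_0)/\varphi(H/H_0)$; a Dirichlet-style count of $U \equiv S \pmod{M_1}$ with $(U, H/H_0) = 1$ then gives $\varphi_l(H/H_0)/\varphi(M_1)$, with Theorem~\ref{MAIN_TH1} entering through $\varphi_l$. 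For the mod $H_0$ factor, the same substitutions $U = K_1 + \cdots + K_l$ and $K_j = U K_j'$ (valid since $(U, H_0) = 1$) decouple it as
$$C_l \cdot \sum_{\substack{U \pmod{H_0} \\ U \equiv S \pmod{M_0},\; (U, H_0) = 1}} \chi_0(U),\qquad C_l := \sum_{\substack{K_2', \ldots, K_l' \pmod{H_0} \\ (K_j', H_0) = 1,\; (1 - K_2' - \cdots - K_l', H_0) = 1}} \chi_0(1 - K_2' - \cdots - K_l').$$
The $U$-sum vanishes unless $D \mid M_0$, that is $D \mid M$; when $D \mid M_0$, each admissible $U$ satisfies $\chi_0(U) = \psi(U \bmod D) = \psi(S)$, so the sum equals $\psi(S)\,\varphi(H_0)/\varphi(M_0)$. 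The main obstacle is showing $C_l = \mu(D)^{l-1}\,\varphi(H_0^l/D^{l-1})/\varphi(H_0)$; this follows from a CRT decomposition of $C_l$ into local Jacobi-type sums over prime powers $P^{\alpha_P} \| H_0$ combined with inclusion--exclusion on the coprimality conditions, using the defining property of the conductor that $\chi_0$ has vanishing sum on any residue class modulo a proper divisor of $D$, so that only the ``full'' term in the inclusion--exclusion survives at each prime of $D$ and contributes a factor of $-1$ per such prime, assembling to $\mu(D)^{l-1}$. Multiplying the three contributions $|H/M|^s$, $\varphi_l(H/H_0)/\varphi(M_1)$, and $\mu(D)^{l-1} \psi(S) \varphi(H_0^l/D^{l-1})/\varphi(M_0)$ with $(\mu \ast F)(M)$ and summing over $D \mid M \mid H$ with $(H/M) \mid W_i$ reproduces the right-hand side of \eqref{meq}.
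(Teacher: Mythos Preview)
Your proposal is correct, and it takes a genuinely different route from the paper's own argument.

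\textbf{What the paper does.} After the same M\"obius inversion $F=(\mu\ast F)\ast\mathbf 1$ and the same additive-character evaluation of the $B_i$–sums, the paper does \emph{not} split via CRT. Instead it removes the condition $(K_1+\cdots+K_l,H)=1$ by a second M\"obius sum over $N\mid H$ and is led to the counting function
\[
N_l(H,M,N,D,S,U)=\#\Bigl\{(K_1,\dots,K_l):\ (K_j,H)=1,\ K_1\equiv U\ (D),\ \textstyle\sum K_j\equiv S\ (M),\ \sum K_j\equiv 0\ (N)\Bigr\}.
\]
A one-step recursion in $l$ (peel off $K_l$ and use Lemma~\ref{lem2}) gives
\[
N_l=\frac{\varphi(H)}{\varphi(M)\varphi(N)}\sum_{J\mid M}\mu(J)\sum_{I\mid N}\mu(I)\,N_{l-1}(H,J,I,D,S,U),
\]
and then $\sum_{U}\psi(U)N_l$ is evaluated by induction on $l$, with Lemma~\ref{char} forcing $D\mid M$ and producing, prime by prime, the alternating factors $\sum_{j=0}^{l-1}(-1)^j/(|P|-1)^j$. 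The final resummation over $N$ reconstitutes $\varphi_l$, and a short algebraic manipulation collapses the prime products into $\varphi(H_0^l/D^{l-1})\,\varphi_l(H/H_0)$.

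\textbf{What you do differently.} You impose the CRT splitting $H=H_0\cdot(H/H_0)$ at the outset and use the unit-scaling $K_j=UK_j'$ to factor the $K$–sum as (character sum in $U$)\,$\times$\,(shape sum $C_l$). On the $H/H_0$ side this immediately yields $\varphi_l(H/H_0)/\varphi(M_1)$ with no induction; on the $H_0$ side the $U$–sum is handled by Lemma~\ref{char} (giving $D\mid M$ and the factor $\psi(S)\varphi(H_0)/\varphi(M_0)$), and all the inductive work is concentrated in $C_l$. Your approach is more conceptual: the appearance of $\varphi_l(H/H_0)$ and of $\varphi(H_0^l/D^{l-1})$ is transparent rather than emerging from a resummation, and you avoid the auxiliary M\"obius variable $N$. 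The paper's approach is more uniform and self-contained: it never needs the scaling trick, and the global recursion in $l$ does all the work at once.

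\textbf{One remark on your $C_l$ step.} Your description of the evaluation of $C_l$ (``inclusion--exclusion \dots only the full term survives'') is a little imprecise. A clean way to carry it out, after CRT to a single prime power $P^{\alpha}$ with local conductor $P^{\beta}$, is to sum first over $K_2'$: writing $c=1-K_3'-\cdots-K_l'$ one finds
\[
\sum_{(K_2',P)=1}\chi_{0,P}(c-K_2')=-\sum_{V\equiv c\,(P)}\chi_{0,P}(V)
=\begin{cases}-|P|^{\alpha-1}\chi_{0,P}(c)&\beta=1,\\ 0&\beta\ge 2,\end{cases}
\]
the second case by primitivity. This gives the local recursion $C_l^{(P)}=-|P|^{\alpha-1}C_{l-1}^{(P)}$ when $P\,\|\,D$ and $C_l^{(P)}=0$ otherwise, hence $C_l=\prod_{P\mid D}(-|P|^{\alpha_P-1})^{\,l-1}=\mu(D)^{l-1}\varphi(H_0^l/D^{l-1})/\varphi(H_0)$ as you claim. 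This is, in effect, the paper's recursion specialized to the $H_0$–block.
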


\section{Preliminaries}
Let $p$ be a fixed prime, and $q=p^n$. Let $\mathbb{A}=\mathbb{F}_q[T]$ be the polynomial ring over the finite field $\mathbb{F}_q$. We will prove some lemmas which will be useful to prove our main theorems.

\begin{lemma}\label{lem1}
Let $H, D$ be two polynomials in $\mathbb{A}$ such that $D\mid H.$ Then for any $Q \in \mathbb{A}$ with $(Q,H)=1,$ we have
\begin{equation*}
 \vert \{ K \pmod{H} : (K, H)=1 \text{ and } K \equiv Q \pmod D \} \vert = \frac{\varphi(H)}{\varphi(D)}.
\end{equation*}
\end{lemma}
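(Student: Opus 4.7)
The plan is to interpret the counting problem group-theoretically, via the natural reduction map between unit groups.

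First I would observe that, since $D \mid H$, the canonical surjection of rings $\mathbb{A}/H \to \mathbb{A}/D$, sending $K \pmod H$ to $K \pmod D$, restricts to a group homomorphism
$$\pi : (\mathbb{A}/H)^* \longrightarrow (\mathbb{A}/D)^*$$
between unit groups (a unit mod $H$ is coprime to $H$, hence to $D$, hence its reduction mod $D$ is a unit). With this in hand, the set to be counted is precisely the fiber $\pi^{-1}(\overline{Q})$ of $\pi$ over the class of $Q$.

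The key step is to establish that $\pi$ is surjective; once this is known, all fibers have the common size $|\ker \pi| = |(\mathbb{A}/H)^*|/|(\mathbb{A}/D)^*| = \varphi(H)/\varphi(D)$, which is exactly the claim. For surjectivity I would take an arbitrary $Q \in \mathbb{A}$ with $(Q,D) = 1$ and produce a lift $K$ with $(K,H)=1$ and $K \equiv Q \pmod D$. The cleanest way is via the Chinese Remainder Theorem: factor $H = \prod_{i} P_i^{a_i}$, and let $P_1,\ldots,P_m$ be the primes that also divide $D$ (with exponents $b_i \leq a_i$ in $D$). Specify $K$ on each local factor by requiring $K \equiv Q \pmod{P_i^{a_i}}$ for $i \leq m$ (valid lifts of $Q \bmod P_i^{b_i}$ exist and remain coprime to $P_i$ since $P_i \nmid Q$) and $K \equiv 1 \pmod{P_i^{a_i}}$ for $i > m$. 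The resulting $K$ is a unit mod $H$ and reduces to $Q$ mod $D$.

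I expect the only real obstacle to be the surjectivity/lifting step, but since $\mathbb{A} = \mathbb{F}_q[T]$ is a PID and the Chinese Remainder Theorem holds for pairwise coprime polynomials, the argument goes through verbatim as in $\mathbb{Z}$. Everything else---the multiplicativity of $\pi$, the equality of fiber sizes for a surjective homomorphism, and the identification of the target fiber with the set being counted---is formal. As a sanity check, one could alternatively count directly on prime-power factors: for each $P_i$ dividing both $H$ and $D$, lifting $Q$ from $\mathbb{A}/P_i^{b_i}$ to $\mathbb{A}/P_i^{a_i}$ while keeping coprimality to $P_i$ gives $|P_i|^{a_i - b_i}$ choices, and for each $P_i \mid H$ with $P_i \nmid D$ one has $\varphi(P_i^{a_i})$ free choices; multiplying and using $\varphi(P^a)/\varphi(P^b) = |P|^{a-b}$ recovers the same ratio $\varphi(H)/\varphi(D)$.
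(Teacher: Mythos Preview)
Your proposal is correct and follows essentially the same route as the paper: both define the reduction homomorphism $(\mathbb{A}/H)^\times \to (\mathbb{A}/D)^\times$, prove its surjectivity by the identical CRT construction (lift to $Q$ on primes dividing $D$, to $1$ on the others), and conclude that every fiber has size $\varphi(H)/\varphi(D)$. The only cosmetic difference is that the paper spells out the bijection between $\ker\pi$ and the fiber over $Q$ via multiplication by $Q$, whereas you invoke directly that all fibers of a surjective group homomorphism are cosets of the kernel.
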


\begin{proof}
We define a map $ w : (\mathbb{A}/ H)^\times \rightarrow  (\mathbb{A}/ D)^\times $ by 
$$
w(X\pmod{H}) = X \pmod{D}. 
$$
It is clear that $w$ is a group homomorphism. Now we want to show that $w$ is an epimorphism.

Let $H= P_1^{\alpha_1} \cdots P_t^{\alpha_t} Q_1^{\gamma_1} \cdots Q_l^{\gamma_l}$ and $ D = P_1^{\beta_1} \cdots P_t^{\beta_t} $ with $ \beta_i \le \alpha_i.$
Then for any $Y \in \mathbb{A}$ such that $(Y,D)=1,$ the system of congruences
\begin{equation*}
\begin{split}
& X\equiv Y \pmod{P_i^{\alpha_i} } \quad  \forall \quad1 \le i \le t \\
& X\equiv 1 \pmod{Q_j^{\gamma_j}} \quad  \forall \quad1 \le j \le l
\end{split}
\end{equation*}
has a solution by Chinese Remainder Theorem. Hence $(X,H)=1 $ and $w(X)=Y$. This shows that $w$ is an epimorphism.

We have
\begin{equation} \label{2.1 eq1}
\begin{split}
\ker (w) &= \left\{ K \pmod H \in (\mathbb{A}/H)^\times: w(K)=1\right\} \\
&=  \left\{ K \pmod H : (K,H)=1 \text{ and } K \equiv 1 \pmod D\right\} .   
\end{split}
\end{equation}

Also for any $Q\in \mathbb{A} $ with $ (Q,H)=1$, we show that the function 
$$f: \left\{ K \pmod H :  (K,H)=1 \text{ and } K \equiv 1 \pmod D\right\} \rightarrow \left\{ K \pmod{H} : (K, H)=1 \text{ and } K \equiv Q \pmod D \right\}$$
given by 
$$
f(P) = Q P \pmod H $$
is a bijection.

$f$ is one-to-one, as if $f(P_1)= f(P_2) \implies QP_1=QP_2 \pmod H$.
Multiplying both sides by the inverse $Q^{-1}$ of $Q$ in $(\mathbb{A}/ H)^\times$, we get $P_1= P_2 \pmod H$.

Again for any $K \in (\mathbb{A}/ H)^\times $ such that $K \equiv Q \pmod D$, if we take $P = Q^{-1}K \pmod H $, then clearly $(Q^{-1}K,H)=1$, $ Q^{-1}K \equiv 1 \pmod D$ and $ f(Q^{-1}K) = Q Q^{-1}K \pmod H = K \pmod H $. This proves $f$ is a bijection.

Therefore 
\begin{equation} \label{2.1 eq2}
\begin{split}
& \left|\{ K \pmod H :  (K,H)=1 \text{ and } K \equiv 1 \pmod D \}\right| \\
& \quad \quad \quad=  \left| \{ K \pmod{H} : (K, H)=1 \text{ and } K \equiv Q \pmod D  \}\right|.
\end{split}
\end{equation}

Combining \eqref{2.1 eq1} and \eqref{2.1 eq2} we get
\begin{equation*}
  |\ker (w)| = \left| \{ K \pmod{H} : (K, H)=1 \text{ and } K \equiv Q \pmod D \} \right|.
\end{equation*}

Therefore by the first group isomorphism theorem, we have
$$  
(\mathbb{A}/ H)^\times / \ker (w) \cong (\mathbb{A}/ D)^\times.
$$
It follows that
\begin{equation*}
    \vert   \ker (w) \vert = \left|  \frac{(\mathbb{A}/ H)^\times}{(\mathbb{A}/ D)^\times}  \right| .
\end{equation*}
This implies
\begin{equation*}
 \vert \{ K \pmod{H} : (K, H)=1 \text{ and } K \equiv Q \pmod D \} \vert = \frac{\varphi(H)}{\varphi(D)}.
\end{equation*}
\end{proof}

\medspace

Let $H, D\text{ and } M \in \mathbb{A}$ such that $ D, M \mid H$. Then for any $Q, S \in \mathbb{A}$, we define
 
\begin{equation}\label{SH}
 S_H(D,M, Q, S):= \{ K \pmod H : (K,H)=1 , K\equiv Q \pmod D, K \equiv S \pmod M \}
\end{equation}
and
\begin{equation}\label{gH}
g_H(D,M , Q, S) := \vert S_H(D,M, Q, S) \vert.
\end{equation}

\medspace

\begin{lemma}\label{L2}
Let $H= P_1^{\alpha_1} \cdots P_t^{\alpha_t}$ where $P_1, \dots, P_t$ be distinct irreducible polynomials and $\alpha_i \ge 1$ integers. Let $ D= P_1^{\beta_1} \cdots P_t^{\beta_t}$, $M = P_1^{\gamma_1} \cdots P_t^{\gamma_t}$ be such that $0 \le \beta_i, \gamma_i \le \alpha_i$. Then for any $Q,S \in \mathbb{A}$, we have
$$ 
g_H(D,M,Q,S) = \prod_{i=1}^{t} g_{P_i^{\alpha_i}}(P_i^{\beta_i}, P_i^{\gamma_i}, Q,S).
$$
\end{lemma}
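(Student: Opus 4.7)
The plan is to prove Lemma 2.2 via the Chinese Remainder Theorem applied to the decomposition $H = P_1^{\alpha_1} \cdots P_t^{\alpha_t}$ into pairwise coprime prime powers. First I would invoke the ring isomorphism
$$\mathbb{A}/H \;\cong\; \prod_{i=1}^{t} \mathbb{A}/P_i^{\alpha_i},$$
sending $K \bmod H$ to the tuple $(K \bmod P_i^{\alpha_i})_{1\leq i\leq t}$. Since $D \mid H$ and $M \mid H$ with $D = \prod P_i^{\beta_i}$, $M = \prod P_i^{\gamma_i}$ and $\beta_i, \gamma_i \leq \alpha_i$, the CRT isomorphism descends compatibly: for each $i$, the residue $K \bmod P_i^{\beta_i}$ and $K \bmod P_i^{\gamma_i}$ depend only on the $i$th component $K \bmod P_i^{\alpha_i}$.

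Next I would translate each of the three defining conditions of $S_H(D,M,Q,S)$ in \eqref{SH} into componentwise conditions under this isomorphism. The coprimality $(K,H)=1$ is equivalent to $(K, P_i^{\alpha_i}) = 1$ for every $i$, because $(K,H) = \prod_i (K, P_i^{\alpha_i})$ up to units. The congruence $K \equiv Q \pmod D$ is equivalent to $K \equiv Q \pmod{P_i^{\beta_i}}$ for each $i$, and similarly $K \equiv S \pmod M$ decouples as $K \equiv S \pmod{P_i^{\gamma_i}}$ for each $i$. Thus the set $S_H(D,M,Q,S)$ corresponds bijectively under CRT to the Cartesian product
$$\prod_{i=1}^{t} S_{P_i^{\alpha_i}}(P_i^{\beta_i}, P_i^{\gamma_i}, Q, S).$$

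Taking cardinalities gives the claimed multiplicativity
$$g_H(D, M, Q, S) \;=\; \prod_{i=1}^{t} g_{P_i^{\alpha_i}}(P_i^{\beta_i}, P_i^{\gamma_i}, Q, S).$$
I do not expect any genuine obstacle here; the only subtlety worth stating carefully is that $Q$ and $S$ in the local factors refer to the residues of $Q$ and $S$ modulo $P_i^{\beta_i}$ and $P_i^{\gamma_i}$ respectively, and these are well defined because $P_i^{\beta_i} \mid D$ and $P_i^{\gamma_i} \mid M$. Everything else is the standard prime-power decomposition argument, transplanted from $\mathbb{Z}$ to the polynomial ring $\mathbb{A} = \mathbb{F}_q[T]$, where CRT holds in the same form since $\mathbb{A}$ is a PID.
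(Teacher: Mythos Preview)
Your proposal is correct and takes essentially the same approach as the paper: both arguments use the Chinese Remainder Theorem to decompose $S_H(D,M,Q,S)$ as a Cartesian product over the prime-power components. The only cosmetic difference is that the paper peels off one prime power at a time and invokes induction on $t$, whereas you apply the full CRT isomorphism in one step.
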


\begin{proof}
It is sufficient to show that 
$$
g_H(D,M,Q,S) =g_{P_1^{\alpha_1}}(P_1^{\beta_1}, P_1^{\gamma_1}, Q,S) g_{H^{'}}(D',M',Q,S),
$$
where $ H= P_1^{\alpha_1} H^{'}, \,  D=  P_1^{\beta_1}D^{'} \text{ and } M= P_1^{\gamma_1} M^{'}$, and then use the induction on $t$.

\medskip

Define a map $\phi: S_H(D,M,Q,S)  \rightarrow   S_{P_1^{\alpha_1}}(P_1^{\beta_1}, P_1^{\gamma_1}, Q,S) \times  S_H^{'}(D^{'},M^{'},Q,S)$ by
$$
\phi(A) = (A_1,A_2),
$$
where $A_1\in (\mathbb{A}/P_1^{\alpha_1})^\times$, $ A_2 \in (\mathbb{A}/ H')^\times$ be such that $A_1 \equiv A \pmod{ P_1^{\alpha_1}}$, $A_2 = A \pmod{H'}$.

\medskip

It is easy to check that $A_1 \in  S_{P_1^{\alpha_1}}(P_1^{\beta_1}, P_1^{\gamma_1}, Q,S) , A_2 \in  S_H'(D',M',Q,S)$ and $\phi$ is well defined.

To prove $\phi$ is injective, let $A, A^{'} \in S_H(D,M,Q,S)$ be such that $\phi(A) =(A_1, A_2)= (A_1^{'}, A_2^{'})= \phi(A^{'}).$ 
Then $ A\equiv A^{'}\pmod{ P_1^{\alpha_1}} $ and $ A\equiv A^{'} \pmod{H^{'}}$. As $(H^{'}, P_1^{\alpha_1}) =1,$ we have $A\equiv A^{'} \pmod{H}$.

To prove $\phi $ is surjective, let $(A_1, A_2) \in  S_{P_1^{\alpha_1}}(P_1^{\beta_1}, P_1^{\gamma_1}, Q,S) \times  S_H'(D^{'},M^{'},Q,S)$. Since $(P_1^{\alpha_1}, H^{'}) =1,$
by Chinese Remainder Theorem, there exists $A\in (\mathbb{A}/ H)^\times$ such that $ A\equiv A_1\pmod{ P_1^{\alpha_1}} $ and $ A\equiv A_2 \pmod{ H^{'}}$. 
\end{proof}

For simplicity, we write $[A_1,\dots, A_k ] = \text{lcm}(A_1,\dots, A_k)$ for any $A_1,\dots, A_k \in \mathbb{A}$, throughout the following.

\begin{lemma}\label{lem2}
Let $H,D, M, Q, S \in \mathbb{A}$, such that $D, M\mid H$. Then
\begin{equation*}
\sum_{\substack{ K \pmod{H} \\ (K,H)=1\\ K \equiv Q \pmod{D}\\K \equiv S \pmod{M}}}1= 
\begin{cases} 
\frac{\varphi( H)}{\varphi([D,M])} & \text{ if } (Q, D)=(S,M)=1 \text{ and } Q\equiv S \pmod{(D,M)} \\ 
0 & \text{ otherwise, } 
\end{cases} 
\end{equation*} 
\end{lemma}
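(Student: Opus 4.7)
The plan is to reduce the count to a single congruence condition modulo $[D,M]$ and then quote Lemma \ref{lem1}. First I would dispose of the three vanishing cases. If $(Q,D)\neq 1$, then $K\equiv Q\pmod{D}$ forces $(K,D)=(Q,D)>1$, and since $D\mid H$ this contradicts $(K,H)=1$, so the sum is empty; the case $(S,M)\neq 1$ is identical. If instead $Q\not\equiv S\pmod{(D,M)}$, the compatibility hypothesis of the generalized Chinese Remainder Theorem fails, so the simultaneous system $K\equiv Q\pmod D$, $K\equiv S\pmod M$ admits no solution in $\mathbb{A}$, and again the sum is zero.

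In the remaining case, assume $(Q,D)=(S,M)=1$ together with $Q\equiv S\pmod{(D,M)}$. Since $\mathbb{A}=\mathbb{F}_q[T]$ is a principal ideal domain, the generalized Chinese Remainder Theorem produces a unique residue class $R\pmod{[D,M]}$ satisfying both $R\equiv Q\pmod D$ and $R\equiv S\pmod M$. I would then verify that $(R,[D,M])=1$ by a prime-by-prime check: every monic irreducible $P$ dividing $[D,M]$ divides $D$ or $M$; if for instance $P\mid D$, then $R\equiv Q\pmod P$ combined with $(Q,D)=1$ forces $P\nmid R$, and symmetrically if $P\mid M$. Hence $R$ is coprime to $[D,M]$.

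At this point the sum in question equals
\begin{equation*}
\#\bigl\{K\pmod{H}:(K,H)=1,\ K\equiv R\pmod{[D,M]}\bigr\},
\end{equation*}
because the two congruences on $K$ are equivalent to the single congruence $K\equiv R\pmod{[D,M]}$. Since $[D,M]\mid H$ and $(R,[D,M])=1$, Lemma \ref{lem1} applies with $D$ there replaced by $[D,M]$ and $Q$ there replaced by $R$, and gives $\varphi(H)/\varphi([D,M])$. The only real subtlety is recognizing that the correct tool here is the \emph{generalized} CRT with the compatibility condition modulo $(D,M)$, rather than the coprime version used inside the proof of Lemma \ref{lem1}; once that is in hand the coprimality $(R,[D,M])=1$ follows mechanically and the result is immediate.
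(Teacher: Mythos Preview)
Your argument is correct and takes a genuinely different route from the paper. The paper first invokes Lemma~\ref{L2} to factor $g_H(D,M,Q,S)$ over the prime powers of $H$, reduces to the case $H=P^\alpha$, $D=P^\beta$, $M=P^\gamma$ for a single irreducible $P$ (where one of $D,M$ divides the other, so the compatibility condition and the identification $[D,M]=P^{\max\{\beta,\gamma\}}$ are trivial), applies Lemma~\ref{lem1} there, and then multiplies the local answers back together. You instead stay global: you use the generalized CRT in the PID $\mathbb{A}$ to merge the two congruences into a single one modulo $[D,M]$, check $(R,[D,M])=1$ prime by prime, and invoke Lemma~\ref{lem1} once. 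Your route is shorter and bypasses the auxiliary multiplicativity Lemma~\ref{L2} entirely; the paper's route avoids ever needing the non-coprime form of CRT, since locally one modulus divides the other.

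One small point to tighten: Lemma~\ref{lem1} as stated assumes $(Q,H)=1$, not merely coprimality with the smaller modulus, whereas you have only established $(R,[D,M])=1$ for a residue class $R$ defined modulo $[D,M]$. To apply the lemma verbatim, observe that the surjectivity of $(\mathbb{A}/H)^\times \to (\mathbb{A}/[D,M])^\times$ proved inside Lemma~\ref{lem1} lets you lift $R$ to a representative with $(R,H)=1$; after that your final step goes through unchanged.
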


\begin{proof}
From (\ref{SH}) and (\ref{gH}), we get 
\begin{equation*}
\sum_{\substack{ K \pmod{H} \\ (K,H)=1\\ K \equiv Q \pmod{D}\\K \equiv S \pmod{M}}}1 = g_H(D,M , Q, S) .
\end{equation*}

By Lemma \ref{L2}, it is enough to prove this lemma when $ H= P^\alpha, \, D= P^\beta, \, M= P^\gamma$ with $ 0\le \beta , \gamma \le \alpha .$

As $(K, H)= 1$, $D\mid H$ and $ M\mid H, $  it is obvious that $(K,D)= (K,M)=1.$ Since $K \equiv Q \pmod{D}$ and $K \equiv S \pmod{M}$, we have $(Q, D)=(S,M)=1$.
Thus if $(Q, P^\beta)\neq 1$ or $(S, P^\beta) \neq 1,$ then 
$$g_H(P^\beta, P^\gamma,S,M)=0. $$
We consider the case $(Q,P^\beta)=1$ and $(S, P^\gamma)=1$.
If $Q \not\equiv S \pmod{ (P^\beta, P^\gamma) }$, then also $$ g_{P^\alpha}(P^\beta, P^\gamma, Q,S)=0. $$
Thus we assume that $Q \equiv S \pmod{ (P^\beta, P^\gamma) } \equiv S \pmod{ P^{\, \min \{\beta, \gamma\}}}$ and $ (Q, P^\beta)=(S, P^\gamma)=1$.\\
It is clear that $[P^\beta, P^\gamma]= P^{\,\max\{\beta, \gamma\}}$. By Lemma \ref{lem1}, we get
$$
g_{P^\alpha}(P^\beta, P^\gamma, Q,S)= \frac{\varphi(P^\alpha)}{\varphi([P^\beta, P^\gamma])},
$$
and by Lemma \ref{L2}, this proves the lemma.
\end{proof}

\medskip

\begin{lemma} \label{char}
	Let $D,S \in \mathbb{A}$ be such that $(S,D)=1$ and $\psi$ a primitive character modulo $D$. Then for any $Q\mid D$, we have  
	$$
	\sum_{\substack{ U \pmod{D} \\ U\equiv S \pmod{Q}}} \psi (U) \neq 0  \;\;\;\; \text{  if and only if } \;\;\;\; Q=D.
	$$
	In particular, if $ Q=D ,$ then 
	$$
	\sum_{\substack{ U \pmod{D} \\ U\equiv S \pmod{Q}}} \psi (U) = \psi(S).
	$$
\end{lemma}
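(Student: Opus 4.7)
The proof splits naturally into the two cases $Q=D$ and $Q\mid D$ with $Q\neq D$.

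The first case is essentially trivial. When $Q=D$, the congruence condition $U\equiv S\pmod{Q}$ pins down $U$ uniquely modulo $D$, so the sum reduces to the single term $\psi(S)$. Since $(S,D)=1$ and $\psi$ is a character modulo $D$, we have $\psi(S)\neq 0$, which simultaneously establishes the identity in the ``in particular'' clause and the nonvanishing in the forward direction.

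For the case of proper divisors $Q\mid D$ with $Q\neq D$, the plan is a standard character/orthogonality-type argument. Because $\psi$ is primitive modulo $D$, no proper divisor $Q$ of $D$ is an induced modulus for $\psi$. Translating this via the definition in the excerpt, I would first argue that there exists $A_0\in \mathbb{A}$ with $(A_0,D)=1$, $A_0\equiv 1\pmod{Q}$, and $\psi(A_0)\neq 1$; otherwise $\psi$ would agree on every pair $A\equiv B\pmod{Q}$ coprime to $D$ (by writing $A=B\cdot (B^{-1}A)$ in $(\mathbb{A}/D)^{\times}$ and noting $B^{-1}A\equiv 1\pmod{Q}$), contradicting primitivity. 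Let
\[
T=\sum_{\substack{U\pmod{D}\\ U\equiv S\pmod{Q}}} \psi(U).
\]

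Next I would use the change of variable $U\mapsto A_0 U$ modulo $D$. Since $(A_0,D)=1$, this is a bijection on $(\mathbb{A}/D)^{\times}$; and since $A_0\equiv 1\pmod{Q}$, the condition $U\equiv S\pmod{Q}$ is preserved (both $U$ and $A_0U$ lie in the same residue class mod $Q$). Only terms with $(U,D)=1$ contribute to $T$ because $\psi$ vanishes on the others, so the substitution is a bijection on the actual support of the sum. Hence
\[
\psi(A_0)\,T=\sum_{\substack{U\pmod{D}\\ U\equiv S\pmod{Q}}} \psi(A_0 U)= \sum_{\substack{V\pmod{D}\\ V\equiv S\pmod{Q}}}\psi(V)=T,
\]
so $(\psi(A_0)-1)T=0$, and $\psi(A_0)\neq 1$ forces $T=0$.

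The only genuine obstacle is the extraction of the element $A_0$. Everything else is mechanical (bookkeeping of residues and a single change of variable), so I would spend most of the prose justifying that the primitivity hypothesis, as phrased in the paper's definition of conductor and primitive character, really does produce an $A_0\in\mathbb{A}$ with $(A_0,D)=1$, $A_0\equiv 1\pmod Q$, and $\psi(A_0)\neq 1$. Once this is in hand, the two displayed lines above complete the lemma.
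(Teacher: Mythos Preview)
Your proof is correct, and the core device—multiplying by a witness $A_0$ with $(A_0,D)=1$, $A_0\equiv 1\pmod{Q}$, and $\psi(A_0)\neq 1$—is exactly what the paper uses as well. The difference is organizational. The paper splits into three cases ($Q=D$, $Q=1$, and $1\neq Q\neq D$), then in the last case invokes the Chinese Remainder Theorem to factor $D=\prod_i P_i^{\alpha_i}$ and $\psi=\prod_i \psi_{P_i^{\alpha_i}}$, selects a single index $j$ with $\beta_j<\alpha_j$, and runs the multiplication trick only on the local factor $\psi_{P_j^{\alpha_j}}$ to kill one term of the product. You instead extract the witness $A_0$ globally from the definition of primitivity and apply the shift once on all of $(\mathbb{A}/D)^\times$, with no CRT and no separate treatment of $Q=1$. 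Your route is shorter and more conceptual; the paper's route makes the origin of the witness more explicit (it lives at a specific prime power) but pays for it with the case split and the CRT bookkeeping.
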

\begin{proof}
	{\noindent \sf Case I:} Let $Q= D$. Since $(S,D)=1,$ we have $\psi(S)\neq 0.$
	Hence 
	$$
	\sum_{\substack{ U \pmod{D} \\ U\equiv S \pmod{D}}} \psi (U) = \psi(S).
	$$
{\noindent \sf Case II:} Let $Q=1$. Since $\psi$ is a primitive character modulo $D,$ Then 
	$$
	\sum_{\substack{ U \pmod{D} \\ U\equiv S \pmod{Q}}} \psi (U) = \sum_{\substack{ U \pmod{D} }} \psi (U)=0.
	$$
{\noindent \sf Case III:} Let $Q \neq 1 $, $ Q\neq D$ and 
	$D= P_1^{\alpha_1} \cdots P_m^{\alpha_m}$,
	where $P_1, \dots, P_m$ are irreducible polynomials in $\mathbb{A}$ and $\alpha_1, \dots , \alpha_m$ are positive integers. Since $Q\mid D$, we have $Q= P_1^{\beta_1} \cdots P_m^{\beta_m}$, where $0\le \beta_i \le \alpha_i $ and $i= 1,2 ,\dots , m$.
	
Since $\psi$ is a primitive character modulo $D$, there are primitive characters $\psi_{P_i^{\alpha_i}}$ modulo $ P_i^{\alpha_i}$  for each $i= 1,2,\dots, m$ such that
\begin{equation*}
\begin{split}
\psi(U) &= \psi_{P_1^{\alpha_1}} \cdots  \psi_{P_m^{\alpha_m}}(U) \\
	&= \psi_{P_1^{\alpha_1}}(U) \cdots  \psi_{P_m^{\alpha_m}}(U).\\    
\end{split}
\end{equation*}
By Chinese Remainder Theorem, we have an isomorphism 
$$
\mathbb{A}/ D \cong \mathbb{A}/ {P_1^{\alpha_1}}  \oplus \cdots \oplus \mathbb{A}/{P_m^{\alpha_m}}.
$$
Hence 
\begin{equation}\label{L42}
	\sum_{\substack{ U \pmod{D} \\ U\equiv S \pmod{Q}}} \psi (U) = \prod_{i=1}^{m} \quad \sum_{\substack{ U_i \pmod{ P_i ^{\alpha_i} } \\ U_i \equiv S \pmod{ P_i ^{\beta_i}}}} \psi_{P_i^{\alpha_i}}(U_i).
\end{equation}
Since $Q\neq D$ and $Q\neq 1$, there exists $j \in \{ 1,2, \dots, m \}$ such that $ 0< \beta_j < \alpha_j$. \\
We now show that
	$$
	\sum_{\substack{ U\pmod{ P_j ^{\alpha_j} } \\ U \equiv S \pmod{ P_j ^{\beta_j}}}} \psi_{P_j^{\alpha_j}}(U) =0.
	$$
	Since $(S,D)= 1,$ we have $(S, P_j)= 1$, hence
	
	\begin{equation}\label{L43}
		\begin{split}
			\sum_{\substack{ U \pmod{ P_j ^{\alpha_j} } \\ U \equiv S \pmod{ P_j ^{\beta_j}}}} \psi_{P_j^{\alpha_j}}(U) &= \sum_{\substack{ US \pmod{ P_j ^{\alpha_j} } \\ US \equiv S \pmod{ P_j ^{\beta_j}}}} \psi_{P_j^{\alpha_j}}(US) \\ 
			&= \psi_{P_j^{\alpha_j}}(S)  \sum_{\substack{ U\pmod{ P_j ^{\alpha_j} } \\ U \equiv 1 \pmod{ P_j ^{\beta_j}}}} \psi_{P_j^{\alpha_j}}(U).
		\end{split}
	\end{equation}
	Since $\psi_{P_j^{\alpha_j}} $ is a primitive charater modulo $P_j^{\alpha_j}$ and $0< \beta_j < \alpha_j,$ there exists an element $G \in \mathbb{A}$ such that $G\equiv 1 \pmod{ P_j^{\beta_j}}$ and $\psi_{P_j^{\alpha_j}}(G) \neq 1$. \\
Now
	\begin{equation*}
		\begin{split}
			\psi_{P_j^{\alpha_j}}(G)  \sum_{\substack{ U\pmod{ P_j ^{\alpha_j} } \\ U \equiv 1 \pmod{ P_j ^{\beta_j}}}} \psi_{P_j^{\alpha_j}}(U) &= \sum_{\substack{ U\pmod{ P_j ^{\alpha_j} } \\ U \equiv 1 \pmod{ P_j ^{\beta_j}}}} \psi_{P_j^{\alpha_j}}(UG) \\
			&= \sum_{\substack{ UG^{-1}\pmod{ P_j ^{\alpha_j} } \\ U G^{-1} \equiv 1 \pmod{ P_j ^{\beta_j}}}} \psi_{P_j^{\alpha_j}}(U) \\
			&=  \sum_{\substack{ U\pmod{ P_j ^{\alpha_j} } \\ U \equiv 1 \pmod{ P_j ^{\beta_j}}}} \psi_{P_j^{\alpha_j}}(U) .\\
		\end{split}
	\end{equation*}
	Since $\psi_{P_j^{\alpha_j}}(G) \neq 1$, this implies
	$$
	\sum_{\substack{ U\pmod{ P_j ^{\alpha_j} } \\ U \equiv 1 \pmod{ P_j ^{\beta_j}}}} \psi_{P_j^{\alpha_j}}(U) =0.
	$$
	From \eqref{L43} we get,
	$$
	\sum_{\substack{ U \pmod{ P_j ^{\alpha_j} } \\ U \equiv S \pmod{ P_j ^{\beta_j}}}} \psi_{P_j^{\alpha_j}}(U) =0.
	$$
	Hence from \eqref{L42}, 
	$$
	\sum_{\substack{ U \pmod{D} \\ U\equiv S \pmod{Q}}} \psi (U) =0.
	$$
	
\end{proof}

\section{Proof of Theorem \ref{MAIN_TH1}}

Let $H,M \in \mathbb{A}$ be two polynomials. For every $k \geq 1$, we define a more general function than $\varphi_k(H),$ namely
$$ \varphi_k(H,M):= \sum_{\substack{K_1, K_2, \ldots, K_k  \pmod{H} \\(K_1\cdots K_k,H)=1 \\ (K_1+\cdots+K_k,M)=1}} 1. $$\\
Note that $\varphi_k(H,M)= \varphi_k(H)$ whenever $M=H$. \\

We first prove the following lemma. 
\begin{lemma}\label{LM2}(Recursion formula for $\varphi_k(H,M))$
Let $k\geq 2$ and $M,H \in \mathbb{A}$ such that $M \mid H$. Then 
\begin{equation*}
    \varphi_k(H,M)= \varphi(H) \sum_{\substack{D\mid M }}\frac{\mu(D)}{\varphi(D)} \varphi_{k-1}(H,D) .
\end{equation*}
\end{lemma}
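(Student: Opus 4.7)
The plan is to peel off one variable (say $K_1$) from the defining sum of $\varphi_k(H,M)$, detect the coprimality $(K_1+\cdots+K_k, M)=1$ via Möbius inversion, swap the order of summation, and then use Lemma \ref{lem1} to evaluate the resulting residue-class count so that what survives is exactly $\varphi_{k-1}(H, D)$ for each $D \mid M$.

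Concretely, I would first write
\begin{equation*}
\varphi_k(H,M) = \sum_{\substack{K_2, \ldots, K_k \pmod{H} \\ (K_2 \cdots K_k, H)=1}} \ \sum_{\substack{K_1 \pmod{H} \\ (K_1, H)=1 \\ (K_1+\cdots+K_k, M)=1}} 1,
\end{equation*}
and replace the inner coprimality $(K_1+\cdots+K_k, M)=1$ by the Möbius identity $\sum_{D \mid (K_1+\cdots+K_k, M)} \mu(D)$. After exchanging the order of summation, the innermost object becomes a count of $K_1 \pmod H$ satisfying $(K_1, H)=1$ and $K_1 \equiv -(K_2+\cdots+K_k) \pmod D$, indexed by $D \mid M$.

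Since $M \mid H$ forces $D \mid H$, Lemma \ref{lem1} (applied to any $H$-coprime representative of the prescribed residue class mod $D$, such a representative existing precisely when the class is a unit in $\mathbb{A}/D$) evaluates this count as $\varphi(H)/\varphi(D)$ when $(K_2+\cdots+K_k, D)=1$ and as zero otherwise. Reassembling,
\begin{equation*}
\varphi_k(H,M) = \varphi(H) \sum_{D \mid M} \frac{\mu(D)}{\varphi(D)} \sum_{\substack{K_2, \ldots, K_k \pmod{H} \\ (K_2 \cdots K_k, H)=1 \\ (K_2+\cdots+K_k, D)=1}} 1,
\end{equation*}
and by definition the inner sum equals $\varphi_{k-1}(H, D)$, which yields the claimed recursion.

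The only technical point to watch is the third step, where the vanishing case of Lemma \ref{lem1} must deliver precisely the constraint $(K_2+\cdots+K_k, D)=1$ needed to recognize $\varphi_{k-1}(H,D)$ in the outer sum. This is immediate from the equality $(K_2+\cdots+K_k, D)=(-(K_2+\cdots+K_k), D)$, so beyond this bit of bookkeeping I anticipate no serious obstacle.
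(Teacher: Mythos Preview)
Your proposal is correct and follows essentially the same approach as the paper: M\"obius-detect the condition $(K_1+\cdots+K_k,M)=1$, swap the order of summation, peel off one variable, and invoke Lemma~\ref{lem1} to reduce to $\varphi_{k-1}(H,D)$. The only cosmetic difference is that you isolate $K_1$ whereas the paper isolates $K_k$, and you are slightly more explicit than the paper about why the count from Lemma~\ref{lem1} vanishes exactly when $(K_2+\cdots+K_k,D)\neq 1$.
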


\begin{proof}
	We have
\begin{equation*} 
\begin{split}
     \varphi_k(H,M) =& \sum_{\substack{K_1, K_2, \ldots, K_k  \pmod{H} \\(K_1\cdots K_k,H)=1 \\ (K_1+\cdots+K_k,M)=1}} 1 \\ 
     =& \sum_{\substack{ K_1, K_2, \dots, K_k \pmod{ H} \\ (K_1 K_2 \cdots K_k,H )=1}} \, \sum_{\substack{D\mid  (K_1+ K_2+ \cdots+ K_k,M)}}\mu(D) \\
     =& \sum_{D\mid M}\mu(D) \, \sum_{\substack{ K_1, K_2, \dots, K_k \pmod{ H} \\ (K_1 K_2 \cdots K_k,H)=1\\ K_1+ K_2+ \cdots+ K_k \equiv 0 \pmod{D}}}1 \\
     =& \sum_{D\mid M}\mu(D) \,\sum_{\substack{ K_1, K_2, \dots, K_{k-1} \pmod{ H} \\ (K_1 K_2 \cdots K_{k-1},H)=1}} \sum_{\substack{K_k \pmod{H} \\ (K_k, H)=1 \\ K_k \equiv-(K_1 + K_2 + \dots + K_{k-1}) \pmod D}}1.
    \end{split}
\end{equation*}
By using Lemma \ref{lem1} we get,
     \begin{equation*}
     \begin{split}
     \varphi_k(H,M) =& \sum_{D\mid M}\mu(D) \, \sum_{\substack{ K_1, K_2, \dots, K_{k-1} \pmod{ H} \\ (K_1 K_2 \cdots K_{k-1},H)=1 \\(K_1 + K_2 + \cdots + K_{k-1} ,D)=1 }}\frac{\varphi(H)}{\varphi(D)} \quad \\
     =&\; \varphi(H) \sum_{D\mid M}\frac{\mu(D)}{\varphi(D)} \, \sum_{\substack{ K_1, K_2, \dots, K_{k-1} \pmod{ H} \\ (K_1 K_2 \cdots K_{k-1},H)=1 \\(K_1 + K_2 + \cdots + K_{k-1} ,D)=1 }}1 \\
     =&\; \varphi(H) \sum_{D\mid M}\frac{\mu(D)}{\varphi(D)} \, \varphi_{k-1}(H,D). 
\end{split}
\end{equation*}
\end{proof}

{\noindent \bf Proof of Theorem \ref{MAIN_TH1}.} 
Let $H,M \in \mathbb{A}$ and $M \mid H$. We show by induction on $k$ that

\begin{equation}\label{AB1}
    \varphi_k(H,M)= \varphi(H)^k \prod_{P\mid M}\left(1-\frac{1}{|P|-1}+\frac{1}{(|P|-1)^2}-\cdots+ \frac{(-1)^{k-1}}{(|P|-1)^{k-1}}\right) 
\end{equation}
holds for every $k \in \mathbb{N}$.

If $k=1$, then $\varphi_1(H,M)= \varphi(H)$. Let $k\geq 2$, and assume that \eqref{AB1} holds for $k-1$. Using Lemma \ref{LM2} we get,
\begin{equation*}
\begin{split}
\varphi_{k}(H,M) &= \varphi(H) \; \sum_{\substack{D\mid M }}\frac{\mu(D)}{\varphi(D)} \; \varphi_{k-1}(H,D) \\ 
&= \varphi(H) \sum_{D\mid M} \frac{\mu(D)}{\varphi(D)} \;\; \varphi(H)^{k-1} \; \prod_{P\mid D}\left(1-\frac{1}{|P|-1}+\frac{1}{(|P|-1)^2}-\cdots+ \frac{(-1)^{k-2}}{(|P|-1)^{k-2}}\right) \\
&= \varphi(H)^k \prod_{P\mid M} \left(1 + \frac{\mu(P)}{\varphi(P)} \left(1-\frac{1}{|P|-1}+\frac{1}{(|P|-1)^2}-\cdots+ \frac{(-1)^{k-2}}{(|P|-1)^{k-2}}\right) \right) \\
&= \varphi(H)^k \prod_{P\mid M}\left(1-\frac{1}{|P|-1}+\frac{1}{(|P|-1)^2}-\cdots+ \frac{(-1)^{k-1}}{(|P|-1)^{k-1}}\right).
\end{split}
\end{equation*}

Therefore \eqref{AB1} holds for every $k\in \mathbb{N}$. Now if we take $M=H$, then we have the proof of Theorem \ref{MAIN_TH1}.

\section{Proof of Theorem \ref{MAIN-TH2} }

Let $\mathcal{T}$ denote the sum of the LHS of \eqref{meq}.

\begin{equation*}
\begin{split}
\mathcal{T} &= \sum_{\substack{ K_1, K_2,\ldots, K_l, B_1,B_2,\ldots, B_s \pmod{H}\\(K_1+K_2+\cdots+K_l, H)=1\\ (K_1K_2\cdots K_l, H)=1}} F((K_1+K_2+\cdots+K_l-S,B_1,B_2,\ldots, B_s,H))\chi(K_1) \lambda_1(B_1) \cdots \lambda_s(B_s)\\
&= \sum_{U \pmod{D}}\psi(U)\sum_{\substack{K_1, K_2,\ldots, K_l, B_1,B_2,\ldots, B_s \pmod{H}\\(K_1+K_2+\cdots+K_l, H)=1\\ (K_1K_2\cdots K_l, H)=1 \\K_1\equiv U \pmod{D}}} F((K_1+\cdots+K_l-S,B_1,\ldots, B_s,H))\lambda_1(B_1) \cdots \lambda_s(B_s) .
\end{split}
\end{equation*}
By using the convolution identity $F(H)=\displaystyle\sum_{M \mid H}(\mu \ast F)(M)$, we get

\begin{equation}\label{sum}
\begin{split}
   \mathcal{T} &=\sum_{U \pmod{D}}\psi(U)\sum_{\substack{K_1, K_2,\ldots, K_l, B_1,B_2,\ldots, B_s \pmod{H}\\(K_1+K_2+\cdots+K_l, H)=1\\ (K_1K_2\cdots K_l, H)=1 \\K_1\equiv U \pmod{D}}} \lambda_1(B_1) \cdots \lambda_s(B_s) \sum_{M \mid (K_1+K_2+\cdots+K_l-S,B_1,B_2,\ldots, B_s,H)} (\mu \ast F)(M)\\
&= \sum_{\substack{M \mid H}}(\mu \ast F)(M)\sum_{\substack{B_1,B_2,\ldots, B_s \pmod{H}\\ M \mid B_1,\ldots, M \mid B_s}} \lambda_1(B_1) \cdots \lambda_s(B_s)\sum_{U \pmod{D}}\psi(U) \sum_{\substack{K_1, K_2,\ldots, K_l\pmod{H}\\(K_1+K_2+\cdots+K_l, H)=1\\ (K_1K_2\cdots K_l, H)=1 \\K_1\equiv U \pmod{D}\\K_1+K_2+\cdots+K_l \equiv S \pmod M}} 1 \\
&= \sum_{\substack{M \mid H}}(\mu \ast F)(M)\sum_{\substack{B_1,B_2,\ldots, B_s \pmod{H}\\ M \mid B_1,\ldots, M \mid B_s}} \lambda_1(B_1) \cdots \lambda_s(B_s)\sum_{U \pmod{D}}\psi(U) \\
&  \times \sum_{\substack{ K_1, K_2,\ldots, K_l\pmod{H}\\ (K_1K_2\cdots K_l, H)=1\\K_1 \equiv U \pmod D \\K_1+K_2+\cdots+K_l \equiv S \pmod M}}\sum_{N \mid(K_1+K_2+\cdots+K_l, H)}\mu(N) \\
&=\sum_{\substack{M \mid H}}(\mu \ast F)(M)\sum_{\substack{B_1,B_2,\ldots, B_s \pmod{H}\\ M \mid B_1,\ldots, M \mid B_s}} \lambda_1(B_1) \cdots \lambda_s(B_s)\sum_{U \pmod{D}}\psi(U)\;\; \sum_{N\mid H} \mu(N) \sum_{\substack{ K_1, K_2,\ldots, K_l\pmod{H}\\ (K_1K_2\cdots K_l, H)=1\\K_1 \equiv U \pmod D \\K_1+K_2+\cdots+K_l \equiv S \pmod M\\K_1+K_2+\cdots+K_l \equiv 0 \pmod N}}1 \\
&= \sum_{\substack{M \mid H}}(\mu \ast F)(M)\sum_{\substack{B_1,B_2,\ldots, B_s \pmod{H}\\ M \mid B_1,\ldots, M \mid B_s}} \lambda_1(B_1) \cdots \lambda_s(B_s)\;\;\sum_{N \mid H} \mu(N)\sum_{U \pmod D}\psi(U)N_l(H, M, N, D,S,U), 
\end{split}  
\end{equation}
where $N_l(H, M, N, D,S,U)=\displaystyle \sum_{\substack{ K_1, K_2,\ldots, K_l\pmod{H}\\ (K_1K_2\cdots K_l, H)=1\\K_1 \equiv U \pmod D \\K_1+K_2+\cdots+K_l \equiv S \pmod M\\K_1+K_2+\cdots+K_l \equiv 0 \pmod N}}1$.

\medskip

We now evaluate the sum $N_l(H, M, N, D,S,U)$, where $M\mid H$, $N\mid H$ and $U \in \mathbb{A}$ such that $(U,D)=1$ be fixed. Since $(S, H)=1$, if $(M,N) >1$, then $N_l(H, M, N, D,S,U)=0$, the empty sum. Hence we assume that $(M, N) =1$.

If $l=1$, then from Lemma \ref{lem2}, we have 
\begin{equation}
\begin{split}
   N_1(H, M, N, D,S,U)& =\displaystyle \sum_{\substack{ K_1 \pmod{H}\\ (K_1, H)=1\\K_1 \equiv U \pmod D \\K_1\equiv S \pmod M\\K_1 \equiv 0 \pmod N}}1\\
    & = \begin{cases}
       	\frac{\varphi( H)}{\varphi([D,M])} & \text{ if }  N=1 , (U, D)=(S,M)=1 \text{ and } U \equiv S \pmod{(D,M)},\\ 
			0 & \text{ otherwise. }   
     \end{cases} \\
\end{split}
\end{equation}

\medskip

\begin{lemma}{ (Recursion formula) } \label{rec}
Let $H,M,N,U \in \mathbb{A}$ be such that $M, N \mid H$ and $(M,N) =1$. Then 
\begin{equation}
    N_l(H, M, N, D,S,U) = \frac{\varphi(H)}{\varphi(M) \varphi(N)} \displaystyle\sum_{J|M} \mu(J) \displaystyle\sum_{I|N} \mu (I) N_{l-1}(H, J, I, D ,S,U).
\end{equation}
\end{lemma}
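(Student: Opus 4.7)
The plan is to isolate the last variable $K_l$ and convert the coprimality conditions on the partial sum into M\"obius sums, which will then be reassembled into smaller instances of $N_{l-1}$.

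First I would fix $K_1, K_2, \ldots, K_{l-1}$ satisfying $(K_1 K_2 \cdots K_{l-1}, H) = 1$ and $K_1 \equiv U \pmod{D}$, and count the number of admissible $K_l$. The three conditions on $K_l$ are: $(K_l, H) = 1$, together with the two congruences
\[
K_l \equiv S - (K_1 + \cdots + K_{l-1}) \pmod{M}, \qquad K_l \equiv -(K_1 + \cdots + K_{l-1}) \pmod{N}.
\]
Since $(M, N) = 1$, the compatibility condition modulo $(M, N) = 1$ in Lemma \ref{lem2} is automatic, and $[M, N] = MN$, so Lemma \ref{lem2} gives that the count equals $\varphi(H)/(\varphi(M)\varphi(N))$ precisely when both $\bigl(S - (K_1 + \cdots + K_{l-1}), M\bigr) = 1$ and $\bigl(K_1 + \cdots + K_{l-1}, N\bigr) = 1$, and is zero otherwise.

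Next I would detect these two coprimality conditions by M\"obius inversion, writing
\[
\sum_{J \mid (S - (K_1 + \cdots + K_{l-1}),\, M)} \mu(J) = \mathbf{1}_{(S - (K_1 + \cdots + K_{l-1}),\, M) = 1},
\]
and analogously for $N$ with variable $I$. Substituting both and swapping the order of summation brings $\sum_{J \mid M} \mu(J) \sum_{I \mid N} \mu(I)$ to the outside, producing a global prefactor of $\varphi(H)/(\varphi(M)\varphi(N))$ and leaving the inner sum over $K_1, \ldots, K_{l-1} \pmod{H}$ subject to $(K_1 \cdots K_{l-1}, H) = 1$, $K_1 \equiv U \pmod{D}$, $K_1 + \cdots + K_{l-1} \equiv S \pmod{J}$, and $K_1 + \cdots + K_{l-1} \equiv 0 \pmod{I}$. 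This inner sum is, by definition, $N_{l-1}(H, J, I, D, S, U)$, which yields the claimed recursion.

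The only real subtlety is the handling of the case $(M, N) > 1$, which was already excluded in the paragraph preceding the lemma because it would force $S \equiv 0 \pmod{(M, N)}$ while $(S, H) = 1$. Under the standing assumption $(M, N) = 1$, the simultaneous congruences decouple cleanly via Lemma \ref{lem2}, and the rest is bookkeeping with M\"obius inversion. The main (mild) obstacle is just being careful that the M\"obius sums range over all divisors of $M$ and $N$, not merely over divisors of the gcds, so that after interchanging the order of summation the inner object matches the definition of $N_{l-1}$ verbatim.
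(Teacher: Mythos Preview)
Your proposal is correct and follows essentially the same approach as the paper: isolate $K_l$, apply Lemma~\ref{lem2} (using $(M,N)=1$ so that $[M,N]=MN$ and compatibility is automatic), detect the two resulting coprimality conditions by M\"obius inversion, swap the order of summation, and identify the inner sum as $N_{l-1}(H,J,I,D,S,U)$. The paper's proof is line-for-line the same argument.
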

\begin{proof}
We have
\begin{equation*}
\begin{split}
N_l(H, M, N, D,S,U) &=\displaystyle \sum_{\substack{ K_1, K_2,\ldots, K_l\pmod{H}\\ (K_1K_2\cdots K_l, H)=1\\K_1 \equiv U \pmod D \\K_1+K_2+\cdots+K_l \equiv S \pmod M\\K_1+K_2+\cdots+K_l \equiv 0 \pmod N}}1   \\
&= \displaystyle \sum_{\substack{ K_1, K_2,\ldots, K_{l-1} \pmod{H}\\ (K_1K_2\cdots K_{l-1}, H)=1\\K_1 \equiv U \pmod D  }} \displaystyle \sum_{\substack{ K_l \pmod{H}\\ (K_l, H)=1\\  K_l \equiv S-K_1-K_2-\cdots-K_{l-1} \pmod M\\  K_l \equiv -K_1-K_2-\cdots-K_{l-1} \pmod N}} 1  . \\
\end{split}
\end{equation*}
Since $(M,N)=1 $, Using Lemma \ref{lem2}, we get
\begin{equation*}
\begin{split}
N_l(H, M, N, D,S,U) &=\displaystyle \sum_{\substack{ K_1, K_2,\ldots, K_{l-1}\pmod{H}\\ (K_1K_2\cdots K_{l-1}, H)=1\\K_1 \equiv U \pmod D \\(K_1+K_2+\cdots+K_{l-1} -S,M) =1 \\(K_1+K_2+\cdots+K_{l-1},N) =1}} \frac{\varphi(H)}{\varphi(M)\varphi(N)}  \\
&= \frac{\varphi(H)}{\varphi(M)\varphi(N)} \displaystyle \sum_{\substack{ K_1, K_2,\ldots, K_{l-1}\pmod{H}\\ (K_1K_2\cdots K_{l-1}, H)=1\\K_1 \equiv U \pmod D }} \; \displaystyle \sum_{\substack{ J \mid (K_1+\cdots+K_{l-1} -S,M)}} \mu (J)\displaystyle \sum_{\substack{ I\mid (K_1+\cdots+K_{l-1},N)}} \mu(I) \\
&=  \frac{\varphi(H)}{\varphi(M)\varphi(N)} \displaystyle \sum_{J\mid M} \mu(J) \sum_{I\mid N} \mu(I) \; \sum_{\substack{ K_1, K_2,\ldots, K_{l-1}\pmod{H}\\ (K_1K_2\cdots K_{l-1}, H)=1\\K_1 \equiv U \pmod D\\ K_1+\cdots+K_{l-1} \equiv S \pmod J \\K_1+\cdots+K_{l-1} \equiv 0 \pmod I }} 1 \\
&= \frac{\varphi(H)}{\varphi(M)\varphi(N)} \displaystyle \sum_{J\mid M} \mu(J) \sum_{I\mid N} \mu(I)\; N_{l-1}(H, J, I, D,S,U).
\end{split}
\end{equation*}
\end{proof}

\begin{lemma} \label{lemma4.2}
Let $M,N \in \mathbb{A}$ such that $M \mid H$, $N \mid H$ with $(M,N)=1$. Then for every integer $l \geq 2$,
\begin{multline*}
	\sum_{\substack{ U \mod{ D} \\ (U,D)=1}} \psi (U) \; N_l(H, M, N, D,S,U) = \frac{\varphi(H)^l \psi(S) \mu(D)^{l-1}}{\varphi(M) \varphi(N)} \prod_{P\mid D} \frac{1}{(|P|-1)^{l-1}} \\
	  \prod_{\substack{P \mid M \\ P \nmid  D }} \left( 1-\frac{1}{(|P|-1)}+ \cdots+ (-1)^{l-1}\frac{1}{(|P|-1)^{l-1}} \right) \; \prod_{P \mid N } \left( 1-\frac{1}{(|P|-1)}+ \cdots+ (-1)^{l-2}\frac{1}{(|P|-1)^{l-2}} \right),
\end{multline*}
if $D \mid M$. Otherwise, the sum is 0.
\end{lemma}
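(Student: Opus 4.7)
The plan is to induct on $l \geq 2$, using the recursion in Lemma \ref{rec} to drive the inductive step and Lemma \ref{char} to evaluate the $\psi$-twisted sum over $U$.

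For the base case $l=2$, I would apply Lemma \ref{rec} once to write $N_2$ in terms of $N_1$, and then substitute the explicit formula for $N_1$ derived right before the lemma. The support of $N_1$ forces $I=1$ and requires $(U,D)=(S,J)=1$ together with $U \equiv S \pmod{(D,J)}$; since $(S,H)=1$ and $J \mid M \mid H$, the condition $(S,J)=1$ is automatic. After interchanging the order of summation, the inner sum over $U$ has exactly the shape of Lemma \ref{char}, which kills every term except those with $(D,J)=D$, i.e.\ $D \mid J$. In particular, if $D \nmid M$ no such $J$ exists and the whole expression vanishes. When $D \mid J$ we have $[D,J]=J$ and the $U$-sum collapses to $\psi(S)$. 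Writing $J = DJ'$ with $(J',D)=1$ and $J'$ squarefree converts the remaining Möbius sum into an Euler product over primes $P \mid M$ with $P \nmid D$, giving precisely the claimed $l=2$ formula.

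For the inductive step, assume the formula for $l-1$. I would apply Lemma \ref{rec}, interchange sums so the $U$-sum sits innermost, and invoke the inductive hypothesis on $\sum_U \psi(U)\, N_{l-1}(H,J,I,D,S,U)$. The hypothesis already forces $D \mid J$, so again the total is $0$ unless $D \mid M$. Assuming $D \mid M$, factor $J = DJ'$ with $(J',D)=1$; the expression then splits as a product of two independent Möbius sums,
\[
\sum_{\substack{J' \mid M/D \\ (J',D)=1}} \frac{\mu(J')}{\varphi(J')}\prod_{P \mid J'} A_{l-1}(P) \quad \text{and} \quad \sum_{I \mid N} \frac{\mu(I)}{\varphi(I)} \prod_{P \mid I} B_{l-1}(P),
\]
where I denote by $A_{l-1}(P)=\sum_{j=0}^{l-2}(-1)^j/(|P|-1)^j$ and $B_{l-1}(P)=\sum_{j=0}^{l-3}(-1)^j/(|P|-1)^j$ the two alternating partial sums inherited from the hypothesis. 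Each of these two sums factors as an Euler product over the relevant primes.

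The decisive algebraic identity is the telescoping relation
\[
1 - \frac{1}{|P|-1}\sum_{j=0}^{k-1}\frac{(-1)^j}{(|P|-1)^j} = \sum_{j=0}^{k}\frac{(-1)^j}{(|P|-1)^j},
\]
which promotes $A_{l-1}\mapsto A_l$ and $B_{l-1}\mapsto B_l$ as required. Combining the two resulting Euler products with the prefactor $\mu(D)^{l-2}/\varphi(D)$ arising from the term $J=D$ and using $\varphi(D)=\prod_{P\mid D}(|P|-1)$ for squarefree $D$ (and noting that if $D$ is not squarefree both sides vanish, since no squarefree $J$ contains $D$ and $\mu(D)^{l-1}=0$) yields the closed form in the statement. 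The main obstacle is purely bookkeeping: keeping the three prime products indexed by $D$, by $M \setminus D$, and by $N$ separate, and verifying the telescoping identity above that shifts the index from $l-1$ to $l$; the rest is routine manipulation.
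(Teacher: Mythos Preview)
Your proposal is correct and follows essentially the same route as the paper: induction on $l$, with the base case handled by combining Lemma~\ref{rec} with the explicit formula for $N_1$ and Lemma~\ref{char}, and the inductive step by reapplying Lemma~\ref{rec}, invoking the hypothesis, and collapsing the resulting M\"obius sums via the telescoping identity $1 - \tfrac{1}{|P|-1}\sum_{j=0}^{k-1}(-1)^j/(|P|-1)^j = \sum_{j=0}^{k}(-1)^j/(|P|-1)^j$. Your explicit substitution $J = D J'$ with $(J',D)=1$ is a slightly cleaner way to carry out the same Euler-product computation the paper performs directly.
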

\begin{proof}
We prove the lemma by induction on $l$. If $l=2,$ then 
\begin{equation*}
\begin{split}
    \displaystyle\sum_{\substack{ U \pmod{ D} \\ (U,D)=1}} \psi (U) N_2(H, M, N, D,S,U) &= \sum_{\substack{ U \pmod{ D} \\ (U,D)=1}}\psi (U) \frac{\varphi(H)}{\varphi(M)\varphi(N)} \displaystyle \sum_{J\mid M} \mu(J) \sum_{I\mid N} \mu(I) N_1(H, J, I, D,S,U) \\
    &= \frac{\varphi(H)}{\varphi(M)\varphi(N)} \sum_{\substack{ U \pmod{D} \\ U\equiv S((J,D))}} \psi (U) \sum_{J\mid M} \mu(J) \sum_{\substack{I\mid N \\ I=1}} \mu(I) \frac{\varphi(H)}{\varphi([J,D])}\\
    &= \frac{\varphi(H)^2}{\varphi(M)\varphi(N)}  \quad \sum_{J\mid M} \frac{\mu(J)}{\varphi([J,D])} \;\; \sum_{\substack{ U \pmod{D} \\ U\equiv S((J,D))}} \psi (U) .\\
\end{split}
\end{equation*}
From Lemma \ref{char}, we get $$\sum_{\substack{ U \pmod{D} \\ U\equiv S((J,D))}} \psi (U) \neq 0 \;\;\;\; \text{  if and only if } \;\;\;\; (J,D) =D \;\; \text{ that is, if and only if } \;\; D\mid J.$$ 
Furthermore, if $D\mid J$, then $$\sum_{\substack{ U \pmod{D} \\ U\equiv S((J,D))}} \psi (U) = \psi(S).$$ 
Hence 
\begin{align*}
\displaystyle\sum_{\substack{ U \pmod{ D} \\ (U,D)=1}} \psi (U) N_2(H, M, N, D,S,U) =  \frac{\varphi(H)^2 \psi(S)}{\varphi(M)\varphi(N)}   \sum_{\substack{ J\mid M \\ D\mid J}} \frac{\mu(J)}{ \varphi(D)}.
\end{align*}
Thus if $D\nmid M $ or if $D$ is not square-free, the sum is empty. If $D\mid M$, then 
\begin{equation*}
 \displaystyle\sum_{\substack{ U \mod{ D} \\ (U,D)=1}} \psi (U) N_2(H, M, N, D,S,U) = \frac{\varphi(H)^2 \psi(S) \mu(D)}{\varphi(M) \varphi(N)} \prod_{P\mid D} \frac{1}{(|P|-1)} \prod_{\substack{ P\mid M \\ P\nmid D}} \left( 1-\frac{1}{(|P|-1)} \right).
\end{equation*}
Hence the formula is true for $l=2.$  Assume that the formula is true for $l-1$. Then by the recursion Lemma \ref{rec}, we get
\begin{equation*}
\begin{split}
& \sum_{\substack{ U \pmod{ D} \\ (U,D)=1}} \psi (U) N_l(H, M, N, D,S,U)\\
&=\sum_{\substack{ U \pmod{ D} \\ (U,D)=1}}\psi (U) \frac{\varphi(H)}{\varphi(M)\varphi(N)} \sum_{J\mid M} \mu(J) \sum_{I\mid N} \mu(I) N_{l-1}(H, J, I, D,S,U) \\
&= \frac{\varphi(H)}{\varphi(M)\varphi(N)} \sum_{J\mid M} \mu(J) \sum_{ \substack{I\mid N \\ (I,J)=1 }} \mu(I) \sum_{\substack{ U \pmod{D} \\ (U,D)=1}} \psi (U) \; N_{l-1}(H, J, I, D,S,U) \\
 &= \frac{\varphi(H)}{\varphi(M)\varphi(N)} \sum_{J\mid M} \mu(J) \sum_{ \substack{I\mid N \\ (I,J)=1 }} \mu(I) \frac{\varphi(H)^{l-1} \psi(S) \mu(D)^{l-2}}{\varphi(J) \varphi(I)} \prod_{P\mid D} \frac{1}{(|P|-1)^{l-2}} \\
& \times   \prod_{\substack{P \mid J \\ P \nmid  D }} \left( 1-\frac{1}{(|P|-1)}+ \cdots+ \frac{(-1)^{l-2}}{(|P|-1)^{l-2}} \right)  \prod_{P \mid I } \left( 1-\frac{1}{(|P|-1)}+ \cdots+ \frac{(-1)^{l-3}}{(|P|-1)^{l-3}} \right)\\ 
&= \frac{\varphi(H)^l \psi(S) \mu(D)^{l-2}}{\varphi(M) \varphi(N)}  \prod_{P\mid D} \frac{1}{(|P|-1)^{l-2}} \;\; \sum_{\substack{J\mid M \\ D\mid J}} \frac{\mu(J)}{\varphi(J)}\;\; \prod_{\substack{P \mid J \\ P \nmid  D }} \left( 1-\frac{1}{(|P|-1)}+ \cdots+ \frac{(-1)^{l-2}}{(|P|-1)^{l-2}} \right)\\
& \times  \sum_{\substack{I \mid N }} \frac{\mu(I)}{\varphi(I)} \;\; \prod_{P \mid I} \left( 1-\frac{1}{(|P|-1)}+ \cdots+ \frac{(-1)^{l-3}}{(|P|-1)^{l-3}} \right) \\ 
&= \frac{\varphi(H)^l \psi(S) \mu(D)^{l-2}}{\varphi(M) \varphi(N)}\prod_{P\mid D} \frac{1}{(|P|-1)^{l-2}}\; \mu(D) \prod_{P\mid D}\frac{1}{(|P|-1)}  \\
& \times  \prod_{\substack{P \mid M \\ P \nmid  D }}  1- \frac{1}{|P|-1} \left( 1- \frac{1}{|P|-1}+ \cdots+ \frac{(-1)^{l-2}}{(|P|-1)^{l-2}} \right)  \\
& \times \prod_{\substack{P \mid N }}  1- \frac{1}{|P|-1} \left( 1- \frac{1}{|P|-1}+ \cdots+ \frac{(-1)^{l-3}}{(|P|-1)^{l-3}}\right) \\
&=  \frac{\varphi(H)^l \psi(S) \mu(D)^{l-1}}{\varphi(M) \varphi(N)} \prod_{P\mid D} \frac{1}{(|P|-1)^{l-1}}\;  \prod_{\substack{P \mid M \\ P \nmid  D }} \left( 1-\frac{1}{|P|-1}+ \cdots+ \frac{(-1)^{l-1}}{(|P|-1)^{l-1}} \right)  \\ 
&  \times  \prod_{P \mid N } \left( 1-\frac{1}{|P|-1}+ \cdots+ \frac{(-1)^{l-2}}{(|P|-1)^{l-2}} \right).
\end{split}
\end{equation*}

\end{proof}

\begin{lemma} \label{lemma4.3}
Let $H,B,M \in \mathbb{A}$. Let $\lambda $ be a additive character with $W \in \mathbb{A}$ corresponds to $\lambda.$ Then
$$
 \sum_{\substack{ B\pmod{H} \\ M\mid B}} \lambda(B) = \begin{cases}
       	|\frac{H}{M}| & \text{ if } \frac{H}{M} \mid W,\\ 
			0 & \text{ otherwise. }   
     \end{cases}
$$
\end{lemma}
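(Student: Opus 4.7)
The plan is to exploit orthogonality of additive characters after restricting to the subgroup $M\mathbb{A}/H \subseteq \mathbb{A}/H$. As used in the surrounding context (see the inner sums in \eqref{sum}), we have $M \mid H$, so the map $C \mapsto MC$ gives a bijection between residues of $C$ modulo $H/M$ and residues of $B$ modulo $H$ with $M \mid B$. The sum thus rewrites as $\sum_{C \pmod{H/M}} \lambda(MC)$, and the question reduces to deciding when $\tilde\lambda(C) := \lambda(MC)$ is the trivial character on $\mathbb{A}/(H/M)$.

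The key identity I would establish is that, with $t$ and $t'$ the top-coefficient functions modulo $H$ and modulo $H/M$ respectively, one has $t(WMC) = t'(WC)$ for every $C \in \mathbb{A}$. To see this, reduce $WC$ modulo $H/M$ as $WC = A + Q(H/M)$ with $\deg A < \deg(H/M)$; multiplying by $M$ yields $WMC = MA + QH$, whence $WMC \equiv MA \pmod{H}$. Since $M$ is monic and $\deg(MA) < \deg M + \deg(H/M) = \deg H$, no further reduction is needed, and the coefficient of $T^{\deg H -1}$ in $MA$ coincides with the coefficient of $T^{\deg(H/M) - 1}$ in $A$, namely $t'(A) = t'(WC)$. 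Applying the trace and exponential, this gives $\tilde\lambda(C) = E(W, H/M)(C)$.

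By the uniqueness clause of Lemma \ref{add} (now applied to $\mathbb{A}/(H/M)$), the character $E(W, H/M)$ is trivial on $\mathbb{A}/(H/M)$ if and only if $W \equiv 0 \pmod{H/M}$, i.e.\ $(H/M) \mid W$. In this case every summand equals $1$ and the sum is $|\mathbb{A}/(H/M)| = |H/M|$; otherwise the standard orthogonality of characters on the finite abelian group $\mathbb{A}/(H/M)$ gives zero, as required. The only substantive obstacle is the polynomial identity $t(WMC) = t'(WC)$, which rests on $H$ and $M$ being monic so that reduction modulo $H$ and $H/M$ behaves as ordinary polynomial division with matching leading coefficients; once it is in hand, the conclusion is a routine application of character orthogonality.
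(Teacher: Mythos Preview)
Your proof is correct and follows the same approach as the paper's: substitute $B = MC$ with $C$ ranging over $\mathbb{A}/(H/M)$, identify $\lambda(MC)$ with the additive character $E(W,H/M)(C)$ (equivalently $E(C,H/M)(W)$ in the paper's notation), and then apply orthogonality. The paper states the key identity $E(MC,H)(W)=E(C,H/M)(W)$ without justification, whereas you supply the polynomial argument that verifies it, so your write-up is in fact more complete.
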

\begin{proof}
Now
\begin{equation*}
\begin{split}
    \sum_{\substack{ B \pmod{H} \\ M\mid B}} \lambda(B) & = \sum_{C \pmod{H \mid M}} \lambda(MC) \\
    & = \sum_{C \pmod{H \mid M} } E( MC, H) (W) \\
    & = \sum_{C \pmod{H \mid M} } E(C, \frac{H}{M})(W) \\
    & = \begin{cases}
       	|\frac{H}{M}| & \text{ if } \frac{H}{M} \mid W,\\ 
			0 & \text{ otherwise. }   
     \end{cases}
\end{split}
\end{equation*}
\end{proof}

\noindent{\bf Proof of the Theorem \ref{MAIN-TH2} :} 

Using  Lemma \ref{lemma4.2} and Lemma \ref{lemma4.3}, equation \eqref{sum} becomes
\begin{equation*}
\begin{split}
\mathcal{T} & = \sum_{\substack{M \mid H \\ D\mid M \\\frac{H}{M} \mid W_i}}(\mu \ast F)(M) \left|\frac{H}{M}\right|^s \sum_{\substack{ N\mid H \\ (M,N ) =1 }}  \frac{\varphi(H)^l \psi(S) \mu(D)^{l-1}}{\varphi(M) \varphi(N)} \prod_{P\mid D} \frac{1}{(|P|-1)^{l-1}} \\
& \times  \displaystyle\prod_{\substack{P \mid M \\ P \nmid  D }} \left( 1-\frac{1}{(|P|-1)}+ \cdots+ \frac{(-1)^{l-1}}{(|P|-1)^{l-1}} \right)  \times  \displaystyle\prod_{P \mid N } \left( 1-\frac{1}{(|P|-1)}+\cdots+ \frac{(-1)^{l-2}}{(|P|-1)^{l-2}} \right) \\
& =\varphi(H)^l \psi(S) \mu(D)^{l-1}  \prod_{P\mid D} \frac{1}{(|P|-1)^{l-1}} \sum_{\substack{M \mid H \\ D\mid M \\ \frac{H}{M} \mid W_i}}\frac{ (\mu \ast F)(M)  }{ \varphi(M)} \left|\frac{H}{M}\right|^s \displaystyle\prod_{\substack{P \mid M \\ P \nmid  D }} \left( 1-\frac{1}{(|P|-1)}+\cdots+ \frac{(-1)^{l-1}}{(|P|-1)^{l-1}} \right) \\
& \times \sum_{ \substack{N\mid H\\ (M,N)=1  }} \frac{\mu(N)}{\varphi(N)} \;\; \displaystyle\prod_{P \mid N } \left( 1-\frac{1}{(|P|-1)}+\cdots+ \frac{(-1)^{l-2}}{(|P|-1)^{l-2}} \right) \\
&= \varphi(H)^l \psi(S) \mu(D)^{l-1}  \prod_{P\mid D} \frac{1}{(|P|-1)^{l-1}} \sum_{\substack{M \mid H \\ D\mid M \\ \frac{H}{M} \mid W_i}}\frac{ (\mu \ast F)(M)  }{ \varphi(M)} \left|\frac{H}{M}\right|^s   \\
&  \times \displaystyle\prod_{\substack{P \mid M }} \left( 1-\frac{1}{(|P|-1)}+ \cdots+ \frac{(-1)^{l-1}}{(|P|-1)^{l-1}} \right)  \displaystyle\prod_{\substack{P \mid D }} \left( 1-\frac{1}{(|P|-1)}+ \cdots+ \frac{(-1)^{l-1}}{(|P|-1)^{l-1}} \right)^{-1}   \\ 
& \times \displaystyle\prod_{\substack{P \mid H \\ P \nmid M }} 1-\frac{1}{|P|-1} \left( 1-\frac{1}{(|P|-1)}+ \cdots+ \frac{(-1)^{l-2}}{(|P|-1)^{l-2}} \right)   \\
\end{split}
\end{equation*}

\begin{equation*}
\begin{split}
 & = \varphi(H)^l \psi(S) \mu(D)^{l-1}  \prod_{P\mid D} \frac{1}{(|P|-1)^{l-1}} \sum_{\substack{M \mid H \\ D\mid M \\ \frac{H}{M} \mid W_i}}\frac{ (\mu \ast F)(M)  }{ \varphi(M)} \left|\frac{H}{M}\right|^s   \\
& \quad \quad  \times \displaystyle\prod_{\substack{P \mid M }} \left( 1-\frac{1}{(|P|-1)}+ \cdots+ \frac{(-1)^{l-1}}{(|P|-1)^{l-1}} \right)  \displaystyle\prod_{\substack{P \mid D }} \left( 1-\frac{1}{(|P|-1)}+ \cdots+ \frac{(-1)^{l-1}}{(|P|-1)^{l-1}} \right)^{-1}   \\ 
&  \quad \quad   \times \displaystyle\prod_{\substack{P \mid H}} \left( 1-\frac{1}{(|P|-1)}+ \cdots+ \frac{(-1)^{l-1}}{(|P|-1)^{l-1}} \right)\displaystyle\prod_{\substack{P \mid M}} \left( 1-\frac{1}{(|P|-1)}+ \cdots+ \frac{(-1)^{l-1}}{(|P|-1)^{l-1}} \right)^{-1}.
\end{split}
\end{equation*}
Since $$\varphi_{l}(H)=\varphi(H)^l \prod_{P \mid H} \left(1-\frac{1}{|P|-1}+\frac{1}{(|P|-1)^2}+\cdots + \frac{(-1)^{l-1}}{(|P|-1)^{l-1}}\right),$$ we have

\begin{equation*}
\begin{split}	
\mathcal{T}& =  \varphi_{l}(H) \psi(S) \mu(D)^{l-1} \prod_{P\mid D} \frac{1}{(|P|-1)^{l-1}} \displaystyle\prod_{\substack{P \mid D }} \left( 1-\frac{1}{(|P|-1)}+ \cdots+ \frac{(-1)^{l-1}}{(|P|-1)^{l-1}} \right)^{-1}\\
& \times \sum_{\substack{M \mid H \\ D\mid M \\ \frac{H}{M} \mid W_i}}\frac{ (\mu \ast F)(M)  }{ \varphi(M)} \left|\frac{H}{M}\right|^s\\
&= \mu(D)^{l-1} \; \psi(S) \; \varphi\left( \frac{H_0^l}{D^{l-1}}\right) \varphi_l \left( \frac{H}{H_0}\right) \sum_{\substack{D\mid M \mid H \\ \frac{H}{M} \mid W_i}}\frac{ (\mu \ast F)(M)  }{ \varphi(M)} \left|\frac{H}{M}\right|^s,
\end{split}
\end{equation*}
where $H_0 \mid H$ is such that $H_0$ has the same prime divisors as that of $D$ and $\left(H_0, \frac{H}{H_0} \right)=1$. This completes the proof.
\qed

{\bf Acknowledgements.}
The first named author thanks the Ramakrishna Mission Vivekananda Educational and Research Institute for an excellent work environment and thanks UGC NET fellowship for providing financial support.

\end{document}